\tikzset{cdlabel/.style={above,sloped,
    execute at begin node=$\scriptstyle,execute at end node=$}}    
\tikzset{al/.style={->, bend right=45, thick}}
\tikzset{ar/.style={->, bend left=45, thick}}
\theoremstyle{definition}
\newtheorem{definition}[equation]{Definition}
\newtheorem*{remark*}{Remark}
\newtheorem*{example*}{Example}
\newtheorem{example}[equation]{Example}
\theoremstyle{plain}
\newtheorem{proposition}[equation]{Proposition}
\numberwithin{equation}{section}
\numberwithin{figure}{section}
\numberwithin{figure}{section}
\newtheorem{ourtheorem}{Theorem} 
\newtheorem{ourcorollary}[ourtheorem]{Corollary}
\newcommand\fol{\mathcal{F}}
\newcommand{\R}{\ensuremath{\mathbb{R}}}
\newcommand{\Z}{\ensuremath{\mathbb{Z}}}
\newcommand\alphas{\mbox{\boldmath$\alpha$}}
\newcommand\betas{\mbox{\boldmath$\beta$}}
\newcommand\HD{\mathcal H}
\newcommand\pit{{\widetilde{\pi}}}
\newcommand\bsGamma{\mathsf{\Gamma}}
\newcommand{\ainf}{\mathcal{A}_\infty}
\newcommand{\cfdt}{\widetilde{\cfd}}
\newcommand{\cfat}{\widetilde{\cfa}}
\newcommand{\bdy}{\partial}
\newcommand{\cala}{\mathcal{A}}
\DeclareMathOperator{\Int}{Int}
\DeclareMathOperator{\id}{id}
\newcommand{\HH}{\mathcal{H}}
\newcommand{\zz}{\mathcal Z}
\newcommand{\xxx}{\mathbf{x}}
\newcommand{\cf}{\mathit{CF}}
\newcommand{\hf}{\mathit{HF}}
\newcommand{\cfd}{\mathit{CFD}}
\newcommand{\cfa}{\mathit{CFA}}
\newcommand{\bsd}{\mathit{BSD}}
\newcommand{\bsa}{\mathit{BSA}}
\newcommand{\sfh}{\mathit{SFH}}
\newcommand{\cfhat}{\widehat{\cf}}
\newcommand{\hfhat}{\widehat{\hf}}
\newcommand{\hftilde}{\widetilde{\hf}}
\newcommand{\cftilde}{\widetilde{\cf}}
\newcommand{\cfdhat}{\widehat{\cfd}}
\newcommand{\cfahat}{\widehat{\cfa}}
\newcommand{\bsdhat}{\widehat{\bsd}}
\newcommand{\bsahat}{\widehat{\bsa}}
\newcommand{\brho}{\boldsymbol\rho}
\begin{document}

\title[{A friendly introduction to the bordered contact invariant}]{A friendly introduction to the bordered contact invariant}

\author[A. Alishahi]{Akram Alishahi}
\address {Department of Mathematics, University of Georgia\\ Athens, GA 30606}
\email {akram.alishahi@uga.edu}
\urladdr{\href{https://akramalishahi.github.io/}{https://akramalishahi.github.io/}}
\thanks{AA was supported by NSF Grant DMS-2019396.}
\author[J. Licata]{Joan Licata}
\address {Mathematical Sciences Institute, The Australian National University\\ Canberra, Australia}
\email {joan.licata@anu.edu.au}
\urladdr{\href{https://sites.google.com/view/joanlicata/home}{https://sites.google.com/view/joanlicata/home}}
\author[I. Petkova]{Ina Petkova}
\address {Department of Mathematics, Dartmouth College\\ Hanover, NH 03755}
\email {ina.petkova@dartmouth.edu}
\urladdr{\href{http://math.dartmouth.edu/~ina}{http://math.dartmouth.edu/~ina}}
\author[V. V\'ertesi]{Vera V\'ertesi}
\address {Faculty of Mathematics, University of Vienna}
\email {vera.vertesi@univie.ac.at}
\urladdr{\href{http://mat.univie.ac.at/~vertesi}{www.mat.univie.ac.at/~vertesi}}
\thanks{VV was supported by the ANR grant ``Quantum topology and contact geometry''}

\keywords{Heegaard Floer homology, open book, TQFT, contact topology}
\subjclass[2010]{57M27}

\begin{abstract}
We give a short introduction to the contact invariant in bordered  Floer homology defined by F{\"o}ldv{\'a}ri, Hendricks, and the authors. This article surveys  the contact geometry  required to understand the new invariant but  assumes some familiarity  with bordered Heegaard Floer invariants.   The input for the construction is a special class of foliated open books, which are introduced carefully and with multiple examples. We discuss how a foliated open book may be constructed from an open book for a closed manifold, and how it may  be modified to ensure compatibility  with the  contact bordered invariant.   As an  application of these techniques, we  give a ``local proof"  of the vanishing of the contact invariant for overtwisted structures  in the  form  of an explicit bordered computation.
\end{abstract}

\maketitle

%\tableofcontents

% !TEX root = ../bordered-contact-birs.tex
%%%%%%%%%%%%%%%%%%%%%%%%%%%%%%%%%%%%%%%%%%%%%%%%%%%%%%%

\section{Introduction}
\label{sec:intro}

Contact geometry, often pitched as the odd-dimensional complement to symplectic geometry, considers a $(2k+1)$-dimensional manifold equipped with some additional structure.  In dimension three -- where we reside henceforth -- this extra data is a nowhere-integrable plane field called a contact structure.   Adding this extra data prompts interesting new questions, but one of the most intriguing features of the subject is that this ``extra'' data also offers insight into topological structure apparently unrelated to plane fields at all.  Two notable examples are the role of contact geometry in the proof of the property P conjecture \cite{km-p} and  the proofs that knot Floer homology detects knot genus \cite{hfkg} and fiberedness \cite{pgf, ynf}.

Contact structures themselves split into two mutually exclusive types, known as \emph{tight}  and \emph{overtwisted}.   Overtwisted structures are determined by homotopical data, and so are easy to understand. In contrast, tight contact structures are more mysterious: some, but not all, tight contact structures arise naturally as the boundary of symplectic manifolds, and tight contact structures do not satisfy an $h$-principle. Many existence and classification questions for tight contact structures remain open, but significant progress has been made since the advent of Heegaard Floer homology in the early 2000s and the subsequent development of Floer-theoretic contact invariants.  

Like other Heegaard Floer invariants, the input data for these constructions is a Heegaard diagram for the three-manifold, but in this setting, the Heegaard diagram is induced from an \textit{open book decomposition}, a topological decomposition of a three-manifold that captures the additional data of an equivalence class of contact structure.  Ozsv\'ath and Szab\'o defined the first Heegaard Floer invariant of closed contact three-manifolds in \cite{oszc}. Given a closed, contact manifold $(M,\xi)$, this invariant is a class $c(\xi)$ in the Heegaard Floer homology $\hfhat(-M)$.   In \cite{HKM09_HF}, Honda, Kazez, and Mati\'c gave an alternative description of $c(\xi)$, again using open books. This ``contact class"  gives information about overtwistedness:    if $\xi$ is overtwisted, then $c(\xi) = 0$, whereas  if $\xi$ is Stein fillable, then $c(\xi)\neq 0$ \cite{oszc}.  The contact class was used in the knot Floer homology proofs noted  above, and also to distinguish notions of fillability: Ghiggini used it to construct examples of strongly symplectically fillable contact three-manifolds which do not have Stein fillings \cite{ghfill}.

In this paper, we discuss a recent extension of the contact class to three-manifolds with boundary. Namely, in \cite{afhlpv}, a contact invariant was defined in the bordered sutured  Floer homology of a foliated contact three-manifold  $(M,\xi, \fol)$, which is  a contact manifold  
with a certain type of singular foliation on the boundary. We associate to a foliated  contact three-manifold   a  bordered sutured manifold $(M,  \bsGamma,\mathcal{Z})$.  
The resulting sutures are particularly simple, so one can think of $(M,  \bsGamma,\mathcal{Z})$ as a bordered manifold $(M, \mathcal{Z})$ of a type slightly more general than in \cite{LOTbook}. 
Below, we rephrase the main results of \cite{afhlpv}, translating from ``bordered sutured" to ``multipointed" language. Section~\ref{sec:mpbordered} explores the correspondence between these two viewpoints in more detail.

Using a special decomposition of $(M,\xi, \fol)$ called a \emph{sorted foliated open book}, one can construct an admissible multipointed bordered Heegaard diagram for the manifold $(M, \mathcal{Z})$ and identify a preferred generator. This preferred generator is an invariant of the contact structure. 

 \begin{ourtheorem}[cf.~{\cite[Theorem 1]{afhlpv}}]\label{thm:ca-cd}
Let  $(M,\xi, \fol)$  be a foliated  contact three-manifold with associated  bordered manifold $(M, \mathcal{Z})$. Then there are invariants $c_D(M,\xi, \fol)$ and $c_A(M,\xi, \fol)$ of the contact structure which are well defined homotopy equivalence classes in the multipointed bordered  Floer homologies  $\cfdt(-M, \overline{\zz})$ and  $\cfat(-M, \overline{\zz})$, respectively. 
 \end{ourtheorem}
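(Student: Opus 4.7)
The plan is to parallel the Honda-Kazez-Mati\'c construction of the closed contact class, adapted to the bordered setting. The first step is to pass from a sorted foliated open book for $(M,\xi,\fol)$ to an admissible multipointed bordered Heegaard diagram for the associated bordered manifold $(M,\mathcal{Z})$. The pages of the open book provide a natural decomposition of $M$ into a pair of compression bodies; choosing arcs on a page whose boundary behavior is controlled by the sortedness hypothesis furnishes the $\alpha$-curves, while a suitable pushoff on the adjacent page provides the $\beta$-curves. Basepoints are placed along the binding in a pattern dictated by the pointed matched circle $\overline{\zz}$. I would then verify admissibility, and where admissibility fails perform a finger-move isotopy on the $\beta$-curves supported away from a neighborhood of the binding, so that the construction below is not disturbed.

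The next step is to identify a preferred generator $\x_\xi$: for each pair of parallel $\alpha$/$\beta$ arcs coming from a chosen arc on the page, there is a unique intersection point adjacent to the binding, and the tuple of these distinguished intersections defines $\x_\xi$. A direct geometric inspection shows that no holomorphic curve contributing to the differential of $\cfdt(-M,\overline{\zz})$ can have $\x_\xi$ as an output, since the domain would be forced either to cross a basepoint or to leave the union of the two pages. The analogous statement holds for $\cfat(-M,\overline{\zz})$. These cycles define the putative classes $c_D(M,\xi,\fol)$ and $c_A(M,\xi,\fol)$.

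The main obstacle is proving invariance. One must establish that any two sorted foliated open books compatible with $(M,\xi,\fol)$ are connected by a finite sequence of moves -- interior Giroux stabilizations together with a controlled list of modifications adapted to the foliated boundary -- and then show that each move induces a homotopy equivalence of multipointed type~$D$ (respectively type~$A$) modules under which the preferred generator is sent to the preferred generator. Each move translates to a sequence of Heegaard moves (isotopies, handleslides, index-one/two stabilizations), and the associated continuation maps or homotopy equivalences can be written down explicitly; tracking the preferred generator through these maps is the technical heart, and is strictly more delicate than in the closed case because the boundary foliation and the pointed matched circle must be controlled simultaneously. Independence of the auxiliary Heegaard and analytic data (complex structure, admissibility isotopies) then follows from the standard invariance theorems for multipointed bordered Floer homology, once one observes that the relevant continuation maps preserve $\x_\xi$, being supported away from the basepoints near the binding.
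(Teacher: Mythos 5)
Your proposal follows essentially the same route as the paper and its reference \cite{afhlpv}: double a page of a sorted foliated open book along the binding to obtain an admissible multipointed bordered Heegaard diagram, take as preferred generator the tuple of unique intersection points of the cutting/sorting arcs with their pushoffs on the $S_\epsilon$ side, and prove invariance by reducing any two sorted foliated open books to a common one via positive (foliated) stabilizations and tracking the generator through the induced Heegaard moves. One caveat: the property that must be verified is that $\xxx$ is a \emph{cycle}, i.e.\ that no holomorphic curve counted by the structure maps has $\x_\xi$ as its \emph{input} ($\delta^1(\xxx_D)=0$ as in Proposition~\ref{prop:xd-def}); your phrase ``no holomorphic curve \dots can have $\x_\xi$ as an output'' reads as the opposite assertion, which is neither what is needed nor true in general --- in the paper's own computations the contact generator does appear as a target, e.g.\ $\delta^1(x_2'y_1')=I\otimes x_1'y_1'$, and indeed must do so whenever the class vanishes --- although the geometric argument you give (any positive domain emanating from the generator must cross a basepoint or be a thin strip oriented into the distinguished intersection points) is exactly the right one for the correct claim.
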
 
 
Furthermore, this generator vanishes for overtwisted manifolds, in the following sense. 
 
 \begin{ourtheorem}[cf.~{\cite[Corollary 4]{afhlpv}}]
\label{thm:ot} If $(M, \xi, \mathcal{F})$ is overtwisted, then the classes $c_D(M,\xi, \fol)$ and $c_A(M, \xi, \fol)$ are zero in $H_{\ast}(\cfdt(-M, \overline{\zz}))$ and $H_{\ast}(\cfat(-M, \overline{\zz}))$, respectively. 
 \end{ourtheorem}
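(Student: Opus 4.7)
The plan is to mirror, in the bordered setting, the standard strategy for showing that the Ozsv\'ath--Szab\'o contact class vanishes on overtwisted closed manifolds, but to carry the argument out explicitly and locally using the foliated open book technology.

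First, I would appeal to a characterization of overtwistedness at the level of sorted foliated open books: any overtwisted $(M,\xi,\fol)$ admits, after isotopy and sorted stabilization, a sorted foliated open book whose page and monodromy contain an identifiable \emph{overtwisted model}, analogous to the classical fact that an honest open book for a closed overtwisted manifold may be arranged to contain a sobering arc or a negatively stabilized piece. The invariance of $c_D$ and $c_A$ under the moves relating sorted foliated open books for the same $(M,\xi,\fol)$, established in the construction underlying \thmref{thm:ca-cd}, reduces the proof to producing such a standard model.

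Second, in the chosen model I would draw the piece of the multipointed bordered Heegaard diagram induced by the overtwisted region. The preferred generator has a distinguished local component in this region, and the goal is to exhibit a second generator $y$ that differs from the contact generator only in the overtwisted region, together with a single rigid holomorphic domain on the Heegaard surface (a bigon, or a cancelling pair of such disks) connecting the two. Because the region is a fixed local model, the relevant domain and its moduli space can be identified combinatorially, and this is the explicit bordered calculation advertised in the introduction.

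Third, I would conclude. In $\cfdt(-M,\overline{\zz})$ the differential of $y$ equals the preferred generator, possibly tensored with a length-zero algebra element, which exhibits $c_D$ as a boundary and hence zero in homology; the same computation on the type-$A$ side handles $c_A$. Alternatively, one can invoke the pairing theorem to split $(M,\xi,\fol)$ as a gluing in which one piece is a neighborhood of the overtwisted model; vanishing of the local contact class then forces vanishing of the global one.

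The main obstacle, and the step carrying the real novelty beyond the closed case, is the first one: producing a sorted foliated open book with an explicit overtwisted local piece that is compatible with the bordered construction. Most of the work should go into normalizing an arbitrary sorted foliated open book by positive stabilizations and by moves internal to the foliated open book calculus so as to exhibit such a standard local model, while tracking how the preferred generator---and hence $c_D$ and $c_A$---transforms under these moves.
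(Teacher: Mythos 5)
Your proposal matches the paper's argument in all essentials: the paper identifies a standard local model --- a sorted foliated open book for a neighborhood of an overtwisted disk, built from its characteristic foliation and made sorted by two positive stabilizations --- then computes that the unique rigid bigon gives $\delta^1(x_1y_1w_4)=I\otimes x_1y_1w_1$, so a homotopy equivalence carries the contact generator to zero, and the gluing theorem transports this vanishing to the ambient manifold. The only difference is one of emphasis: the paper reaches the local model directly from the characteristic foliation of the overtwisted disk (rather than by normalizing the monodromy), and the key point making the computation local is that no other coordinate of the contact generator can serve as a moving corner, which is exactly the combinatorial identification of the moduli space you describe.
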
 
 
 Given a pair of foliated contact three-manifolds $(M^L, \xi^L, \fol^L)$ and $(M^R, \xi^R, \fol^R)$  whose  boundaries agree in an appropriate sense, there is a natural way to glue them to obtain a closed contact three-manifold $(M, \xi)$. 
The contact invariants of the two foliated contact three-manifolds pair  to recover the contact invariant  of $(M, \xi)$.

\begin{ourtheorem}[cf.~{\cite[Theorem 2]{afhlpv}}]
\label{thm:glue}
The tensor product $c_A(M^L,\xi^L,\fol^L)\boxtimes c_D(M^R,\xi^R,\fol^R)$ recovers the contact invariant $c(M,\xi)$.
 \end{ourtheorem}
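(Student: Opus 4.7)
The plan is to establish the gluing theorem by climbing a three-rung ladder: open books glue to an open book, Heegaard diagrams glue to a Heegaard diagram, and the preferred generators glue to the preferred generator. The pairing theorem then finishes the argument almost formally. So the first task is to show that a pair of sorted foliated open books on $(M^L,\xi^L,\fol^L)$ and $(M^R,\xi^R,\fol^R)$ with compatible boundary foliations can be glued to yield an honest open book $(S,\phi)$ for the closed manifold $(M,\xi)$. This requires matching the boundary data carefully: the pages of the two foliated open books meet along arcs dictated by $\fol^L$ and $\fol^R$, and these arcs must piece together into closed pages. The monodromies likewise combine into a single monodromy $\phi$ of the glued page $S = S^L \cup S^R$. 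One must check that $(S,\phi)$ supports the contact structure $\xi = \xi^L \cup \xi^R$; in the foliated open book setting, this contact-compatibility should follow from the fact that the local model near the gluing locus is the standard neighborhood of a convex surface.

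Next I would examine the Heegaard diagrams. The sorted foliated open book on each side produces a multipointed bordered Heegaard diagram $\HD^L$ and $\HD^R$ with distinguished generators $\x^L$ and $\x^R$ representing $c_A$ and $c_D$, respectively. I would show that the glued diagram $\HD^L \cup_\partial \HD^R$ is a multipointed Heegaard diagram for $M$ arising from the open book $(S,\phi)$ by essentially the Honda--Kazez--Mati\'c recipe, where the distinguished EH-generator takes the form $\x^L \cup \x^R$. The comparison with HKM is not immediate because HKM work with a single basepoint while the bordered construction is intrinsically multipointed, so one must insert destabilizations (or simply observe that the additional basepoints lie in regions that do not affect the relevant differentials) to identify the cycles. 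Tracking the preferred generators through these moves is a bookkeeping task: the intersection points defining $c_A$ and $c_D$ sit on the pieces of $\alphas$ and $\betas$ that survive unchanged after gluing, so the claim is that no generator-shifting moves are required.

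With these preparations in place, I would apply the pairing theorem in its multipointed form,
\[
\cfat(-M^L, \overline{\zz}) \boxtimes \cfdt(-M^R, \overline{\zz}) \simeq \cftilde(-M),
\]
which on the level of generators sends $\x^L \boxtimes \x^R$ to the corresponding generator in the glued diagram. By the previous step, this generator represents $c(M,\xi)$, and the equality $c_A(M^L,\xi^L,\fol^L)\boxtimes c_D(M^R,\xi^R,\fol^R)=c(M,\xi)$ follows. Well-definedness is inherited from \thmref{thm:ca-cd}, since any two choices of sorted foliated open books on either side are related by moves that induce homotopy equivalences intertwining the preferred generators.

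The main obstacle will be the middle step: verifying that the Heegaard diagram obtained by gluing the bordered diagrams really coincides, after destabilization and isotopy, with a HKM diagram for the glued open book, and that the preferred generators match on the nose. This is partly a combinatorial exercise on the arc configurations near $\partial M^L = -\partial M^R$, and partly a question of controlling the multiple basepoints introduced by the foliated open book construction. The contact-geometric gluing of open books, and the algebraic pairing theorem, are essentially black boxes; it is this diagrammatic identification -- and the choice of a normal form for sorted foliated open books near the boundary that makes it transparent -- that demands the most care.
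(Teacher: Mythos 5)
Your three-rung ladder (glue the sorted foliated open books to an honest open book via the gluing results of Licata--V\'ertesi, identify the glued multipointed Heegaard diagram and glued generator with a Honda--Kazez--Mati\'c-type description of $c(\xi)$ up to the extra basepoints, then invoke the pairing theorem) is exactly the strategy of the proof in the cited reference; this survey itself only states the theorem and defers the argument to that paper. The subtleties you flag --- matching the boundary foliations with reversed signs, and reconciling the multipointed diagram with the single-basepoint HKM cycle via the $\hftilde(M,n)\cong\hfhat(M)\otimes V^{n-1}$ identification --- are indeed the points where the real work lies, so the proposal is correct in approach and correctly locates the technical burden.
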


This paper offers a hands-on  introduction to the bordered contact invariant, favoring geometric intuition over the formal proofs that may be found in \cite{LV} and \cite{afhlpv}. We assume minimal background in contact geometry, so Section~\ref{sec:prelim}  focuses on understanding contact structures via  characteristic foliations. Section~\ref{sec:mpbordered} introduces multipointed bordered Floer homology as a special case of bordered sutured Floer homology, laying the groundwork for a simplified description of the construction of the bordered contact invariant.  Section~\ref{sec:fob} discusses  open books, reviewing the  classical case for closed manifolds  before introducing  foliated open books for manifolds  with boundary.   After exploring some topological examples we define the contact  structure supported by a foliated open book.     We also define the technical condition ``sorted'' for a foliated open book and explain how it may be achieved by  stabilization preserving the supported contact structure.  We illustrate this in a  carefully chosen example of a foliated open book for a neighborhood of an overtwisted disk. In Section~\ref{sec:hd} we describe how to construct a Heegaard diagram from a sorted foliated open book and define an associated generator that represents the contact invariant. Finally, in Section~\ref{sec:local} we extend the earlier example to construct a Heegaard diagram for an overtwisted ball.  A local computation, in conjunction with Theorem~\ref{thm:glue}, then recovers the following vanishing result: 

\begin{ourcorollary}[\cite{oszc}]
\label{cor:ot}
Let $(M, \xi)$  be a closed contact three-manifold. If $\xi$ is overtwisted, then $c(\xi) = 0$.
\end{ourcorollary}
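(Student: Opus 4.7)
The plan is to execute the ``local proof'' strategy suggested by Sections~\ref{sec:fob}--\ref{sec:local}: decompose the closed overtwisted manifold along the boundary of a small neighborhood of an overtwisted disk, compute the bordered contact invariant of the overtwisted piece directly, and then invoke the gluing formula of Theorem~\ref{thm:glue} to propagate vanishing to the whole of $(M,\xi)$.

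Concretely, since $\xi$ is overtwisted, there is an embedded overtwisted disk $D \subset M$. I would take a tubular neighborhood $N$ of $D$, yielding an overtwisted ball $(N, \xi|_N)$, and endow $\partial N$ with a characteristic foliation $\fol$ satisfying the conditions of Section~\ref{sec:prelim} so that $(N, \xi|_N, \fol)$ is a foliated contact three-manifold. The complement $M' := \ol{M \setminus N}$ inherits the opposite boundary foliation $\ol{\fol}$ together with $\xi|_{M'}$, and the two pieces reglue to reproduce $(M,\xi)$ in a fashion compatible with the hypotheses of Theorem~\ref{thm:glue}.

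The core computational step is to produce a sorted foliated open book for $(N, \xi|_N, \fol)$ using the stabilization techniques of Section~\ref{sec:fob}, build the associated admissible multipointed bordered Heegaard diagram as in Section~\ref{sec:hd}, and identify the distinguished generator representing $c_D(N, \xi|_N, \fol)$ in $\cfdt(-N, \ol{\zz})$. One then verifies by a direct bordered calculation that this generator is a boundary in the multipointed complex, yielding
\[ c_D(N, \xi|_N, \fol) = 0 \quad \text{in } H_*(\cfdt(-N, \ol{\zz})). \]
I expect this explicit verification to be the principal obstacle: constructing a usable foliated open book for an overtwisted ball, arranging sortedness (possibly by further stabilization while preserving the supported contact structure), rendering the resulting Heegaard diagram in a form amenable to hand computation, and locating a chain whose differential hits the contact generator.

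Once vanishing is established on the overtwisted side, Theorem~\ref{thm:glue} delivers
\[ c(M,\xi) \;=\; c_A(M', \xi|_{M'}, \ol{\fol}) \boxtimes c_D(N, \xi|_N, \fol) \;=\; c_A(M', \xi|_{M'}, \ol{\fol}) \boxtimes 0 \;=\; 0. \]
The appeal of this route is that $M'$, which could be arbitrarily complicated, never needs to be analyzed: whatever $c_A(M', \xi|_{M'}, \ol{\fol})$ happens to be, tensoring it against a null-homologous class annihilates the result. Thus all of the geometric and algebraic content is concentrated in a single, universal, finite computation for the overtwisted ball, after which the corollary follows uniformly across all closed overtwisted $(M,\xi)$.
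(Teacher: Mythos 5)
Your proposal is correct and follows exactly the route the paper takes: decompose $(M,\xi)$ along the boundary of a standard neighborhood of an overtwisted disk, show by an explicit sorted-foliated-open-book/Heegaard-diagram computation that the contact generator for the overtwisted ball is killed by the differential, and then apply Theorem~\ref{thm:glue} together with functoriality of $\boxtimes$ so that the complement never needs to be analyzed. The computational step you flag as the principal obstacle is precisely the content of Example~\ref{ex:ot-sfobstab} and Section~\ref{sec:local} of the paper.
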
 
 
  Note that  \cite{hkm09} establishes the vanishing of the sutured contact class for a neighborhood of an overtwisted disk.   The TQFT gluing map from \cite{hkm08} then yields a sutured argument that $c(\xi)$ vanishes for overtwisted closed manifolds. Our local construction explicitly constructs the  ``contact compatible'' layer needed in the sutured setting, giving a bordered counterpart to the argument.

\subsection*{Acknowledgements} 
We are grateful to BIRS for hosting the workshop \emph{Interactions of gauge theory with contact and symplectic topology in dimensions 3 and 4}. We also appreciate the helpful feedback provided by the  referee on an earlier version of this paper.

  % !TEX root = ../bordered-contact-birs.tex

\section{Contact manifolds and surfaces}\label{sec:prelim}
A key aim of this paper is to render more accessible a new invariant in  bordered sutured Floer homology, but we'd like to start with a discussion of what this is an invariant \textit{of}.  Since its inception in the early 2000's, Heegaard Floer theory has given rise to invariants for a large range of mathematical objects; this one is distinguished not simply by its input, but also by the fact that the algebraic invariant behaves well under a natural topological operation.  

\subsection{Contact structures}

Recall from the introduction that a contact structure is a nowhere-integrable two-plane field.  We will consider contact structures only on orientable three-manifolds, and we further require that contact structures be \textit{cooriented}.  That is, each contact plane is oriented, so there is a consistent choice of positive normal vector. It will be useful to reference a coordinate model, so we introduce the \textit{standard contact structure} on $\mathbb{R}^3$, where the contact plane at each point is the kernel of the one-form $dz-y dx$.  (A cooriented contact structure may always be described as the kernel of such a \emph{contact form}.) In this case, the vector field $\partial_z$ coorients the contact planes.  We are primarily interested in studying contact manifolds up to \textit{contactomorphism}, that is, up to diffeomorphism preserving the plane fields.

Like topological manifolds, contact manifolds are locally simple but globally complicated.  The contact Darboux Theorem states that every point in a contact three-manifold has a neighborhood contactomorphic to a neighborhood of the origin in the standard contact $\mathbb{R}^3$.  In fact, some higher dimensional substructures also have well behaved neighborhoods.  For example, a curve segment everywhere transverse to the contact planes has a neighborhood contactomorphic to a neighborhood of the $z$-axis in the standard $\mathbb{R}^3$.    In this paper, we will focus  on the kind of two-dimensional submanifolds with particularly nice neighborhoods: convex surfaces. We will characterize convex surfaces by considering certain foliations they carry, but since codimension-one foliations are so central to the rest of the paper, we briefly detour into some general discussion before returning specifically to foliations on convex surfaces.   \

\subsection{Foliations on surfaces}\label{sec:fol} 

Throughout this article we will consider only oriented singular foliations  whose singularities are isolated and are either 
elliptic (see bottom right picture of Figure \ref{fig:braidfoliation}) or hyperbolic (see bottom left picture of Figure \ref{fig:braidfoliation}).  We denote  the set of elliptic singularities by $E$, and the set of hyperbolic singularities by $H$.  Unless explicitly noted, we assume that all regular leaves of the foliation compactify to oriented intervals. Elliptic points are either \emph{sources}, in which case they are also called \emph{positive} elliptic points, or \emph{sinks}, which are also called  \emph{negative} elliptic points. At a four-pronged hyperbolic singularity, the two opposite prongs oriented towards the hyperbolic point  form the \textit{stable separatrix}, while the two prongs oriented away from the hyperbolic point form  the \textit{unstable separatrix}.  The topological type of these foliations can be described combinatorially by the embedded graph formed by the stable and unstable separatrices of the hyperbolic points.  

The foliations appearing in this paper will have additional structure given by assigning a sign to each singular point. The signs of elliptic points have already been introduced, but the signs of hyperbolic points are not visible from the combinatorics of the foliation.  (The sign of a hyperbolic point comes from the orientation of the surface and additional local data that depends on the source of the foliation; see Sections~\ref{sec:charfol} and \ref{ssec:ob}.)  A foliation with the properties above, together with the extra partition of $H=H_+\sqcup H_-$ is called a \emph{signed singular foliation}; in the following we refer to oriented signed singular foliations simply as foliations.

Given a foliation $\fol$ on a surface $F$ satisfying  the hypotheses imposed above, we say that a multicurve $\Gamma\subset F$ is a \textit{dividing set} if $\Gamma$ is everywhere transverse to the leaves of $\mathcal{F}$ and  separates $F$ into two subsurfaces, each of which contains all the singularities of a fixed sign.   With this structure in hand, we are ready to introduce the characteristic foliation on a surface in a contact manifold, which is the key to the local neighborhood theorem mentioned above.  We introduce the aspects of this theory that we will need, and we recommend \cite{Mass} for further reading on the topic.

\subsection{Convex surfaces}\label{sec:charfol}

An oriented surface $F$ embedded in a contact three-manifold $(M, \xi)$ inherits a \emph{characteristic foliation} from $\xi$.  Intersecting the contact plane with the tangent plane at each point in the surface defines a line field, and the leaves of the characteristic foliation are the integral curves of these intersections.  
Characteristic foliations may be more general than the foliations described above, admitting leaves that are circles or even non-manifolds.  However, we will not consider any cases where these phenomena arise. 
 The orientation of the leaves follows from the coorientation of the contact structure, while the signs of the singular points depend on whether the coorientation of the contact structure is a positive or negative normal for $F$.

A surface in a contact structure is \emph{convex} if the contact structure is $I$-invariant in some product neighborhood;  a key result states that a surface is convex if and only if its characteristic foliation admits a dividing curve $\Gamma$  \cite{g-cor}. Remarkably,  the local neighborhood of a convex surface is determined by the dividing set alone.  The property of admitting a dividing curve (and hence, convexity) can be checked combinatorially, and in fact, convex surfaces are $C^\infty$-generic \cite{g-cor}.  Another important aspect of this equivalence is \emph{Giroux's flexibility}, it describes the sense in which $\Gamma$ captures  the essential data of the contact structure in a neighborhood of $F$.  Specifically, if $\Gamma$ is a  dividing set on $F\subset (M,  \xi)$, then  any  foliation divided by $\Gamma$ can be realized as the characteristic foliation of some isotopic surface $F'$ in a neighborhood of $F$.  Thus, given a  separating multicurve $\Gamma$ on a surface $F$, one may choose any foliation divided by $\Gamma$ and construct a compatible contact structure on $F\times I$.   If we choose another foliation, then  Giroux's flexibility implies this is the characteristic foliation on some surface inside this neighborhood, so our original neighborhood in fact contains the contact structure determined  by this new foliation.

\begin{figure}[h]
\centering
\includegraphics[scale=1]{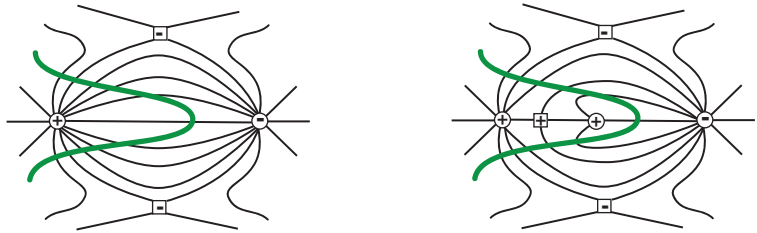}
         \caption{Local pictures of two characteristic foliations divided by the same curve $\Gamma$, shown in green.  Circles are elliptic points and the squares are  hyperbolic points.}\label{fig:diffol}
\end{figure}

Characteristic foliations may exhibit many leaf types, but we will restrict attention to the cases where the hypotheses of Section~\ref{sec:fol} are satisfied; this is also a generic property.  In addition, we will require that each signed singular foliation has no closed leaves or leaves connecting two hyperbolic points, and any such foliation will admit a dividing set, thus ensuring convexity.  To see this, we introduce two graphs $G_\pm$ embedded into $F$ and associated to $\mathcal{F}$. The vertices of $G_+$ are the positive elliptic points, and the edges between them are  the stable separatrices of positive hyperbolic points. The graph $G_-$ is analogously defined using the negative elliptic points and unstable separatrices. Observe that $G_+$ and $G_-$ are disjoint and that the complement of their neighborhoods $N(G_+)$ and $N(G_-)$ has no singularities and is thus foliated by intervals. The \emph{dividing curve} $\Gamma$ of such a foliation is given by the oriented boundary of $N(G_+)$, which is isotopic through curves transverse to the foliation to $-\partial N(G_-)$. When  a  foliation admits a dividing set, it is unique up to isotopy, so we will often refer to ``the'' dividing set.

With the given restrictions on leaf types (i.e., only intervals and leaves containing a single hyperbolic point), the complement of the union of the separatrices is a collection of disks, each with one positive and one negative elliptic point on its boundary. The interior of each disk is  foliated by an $I$-family of leaves from the positive to the negative elliptic point; this can be seen in Figure~\ref{fig:diffol}.

% !TEX root = ../bordered-contact-birs.tex
%%%%%%%%%%%%%%%%%%%%%%%%%%%%%%%%%%%%%%%%%%%%%%%%%%%%%%%

\section{Foliated open books}
\label{sec:fob}

%The construction goes roughly as follows. 

%To a foliated contact manifold, one can associate a topological decomposition called a \emph{foliated open book} \cite{LV}. 
% A foliated open book can roughly be thought of as a surface evolving through time via a sequence of saddle cobordisms, thus forming \emph{pages} whose topology changes, along with a diffeomorphism identifying the terminal surface with the initial one; a non-evolving part of the surface boundary is then collapsed as usual, resulting in a properly embedded 1-manifold called the \emph{binding}. Compatibility of a foliated open book with a contact structure is defined as in the closed case, along with the following extra condition. Given a surface in a contact 3-manifold, its \emph{characteristic foliation} is given by the integral curves for the intersection of the tangent planes for the surface with the contact planes.  The extra compatibility  requirement is that the characteristic foliation on the boundary is ``close" to the foliation induced by the open book decomposition (where leaves are the level sets with respect to the time function). Foliated open books up to positive stabilization in the interior are in correspondence with contact structures up to isotopy relative to the boundary.
 
 We saw in Section~\ref{sec:prelim} that the dividing set on a convex surface suffices to determine the contact structure in a neighborhood of the surface.  Although the precise information about the characteristic foliation is lost, enough data is retained to identify the relevant equivalence class.  This theme is pervasive throughout contact geometry, with open books being one of the most notable illustrations.  An open book decomposition of a contact manifold loses information about the specific contact structure, but with the benefit that the isotopy class of the contact structure is determined by a minimal amount of data.  This economical encoding of the isotopy class was first studied in the contact setting by Thurston-Winkelnkemper and rose in prominence with the work of Giroux \cite{TW} \cite{Gir}.  After recalling the classical construction, we will describe the \textit{foliated open books} first introduced in \cite{LV} as a new version of open books for contact manifolds with convex boundary.  Although the definition of a foliated open book will require us to keep track of more data on the boundary than simply the dividing set, the payoff will be a more user-friendly set of gluing theorems than seen with previous types of open books.

% \ip{Is the definition much faster starting from Morse FOBs? So the manifold is automatically there, and we just say we slice along level sets that separate...}%I think that it's better to stick with abstract for the HF ties
 
 An \emph{abstract open book} for a closed three-manifold is a pair $(S, h)$, where $S$ is a surface with boundary and $h$ an element of its mapping class group. This data suffices to construct an $S$-bundle over $S^1$, and after collapsing the boundary in a controlled way, yields a closed three-manifold.   A foliated open book adapts this approach to the setting of a manifold with boundary.  This time, the data consist of a sequence of $2k$ topologically distinct surfaces and the maps identifying one surface with the next.  Analogously, this determines a manifold with foliated boundary.  
 
  \subsection{Classical open books}
\label{ssec:ob}
This section reviews the definition of an open book decomposition of a closed three-manifold, along with the notion of an \textit{open book foliation} developed in \cite{IK1}.

 \begin{definition}\label{def:aob} An \emph{abstract open book} is a pair $(S, h)$ where $S$ is a surface with 
boundary $\partial S=B$ and  $h \colon S\rightarrow S$ is a diffeomorphism that preserves $B$ pointwise. 
\end{definition}
%We denote by $H_+$ (resp.~$H_-$) the set of indices $i$ for which $S_{i}$ is obtained from $S_{i-1}$ by cutting  (resp.~adding), so that we have a partition $[2k]=H_+\cup H_-$.  \jel{add comment about hyperbolic signs}

An abstract open book determines a closed three-manifold $M$ as follows.  First, consider the product $S\times I$ and identify the points $\big(h(x), 0\big)\sim (x,1)$ to form the mapping torus of $h$.  Then  collapse each component of the boundary $\partial S \times S^1$ to a circle via $(x,t)\sim (x,t')$ whenever $x\in \bdy S$.  The image of $\partial S\times S^1$ is an oriented link called the \textit{binding} and again denoted by $B$, while the surfaces $S\times \{t\}$ become the \textit{pages}.  We will also make use of the function $\pi: M\setminus B\rightarrow S^1$ that sends each point on $S\times \{t\}$ to $t$.

The simplest example of an open book is given by setting $S=D^2$, so that $h$ is necessarily isotopic to the identity.  The pair $(D^2, \id)$ determines $S^3$; to see this, observe that $N(B)$ and $M\setminus N(B)$ give a genus-one Heegaard splitting with meridional curves on the two solid tori intersecting once.  In fact, an open book determines not only a topological three-manifold, but actually a contact three-manifold, but this will be explored in the next section.  For now, we consider further topological structure associated to an open book.

Suppose that $M$ is the closed three-manifold built from $(S,h)$.  Then the pages of $S$ induce a foliation on a generic surface embedded in $M$.  Assume that a surface $F$ is transverse to the binding $B$, so that  $E=B\cap F$ is finite. Additionally, assume that the restriction $\widetilde{\pi}=\pi\vert_F\colon F\setminus E\to S^1$ is an $S^1$-valued Morse function with only one critical point for any critical value. Then the \emph{open book foliation} $\mathcal{F}_\pi$ on $F$ is the foliation induced by the level sets of $\widetilde{\pi}$ together with the elliptic points $E$. Equivalently, the leaves of the foliation are the intersections of $F$ with the pages of the open book.  As seen in  Figure \ref{fig:braidfoliation}, such a foliation may have three types of singularities: the points in $E$ are elliptic points; the index $0$ and $2$ critical points of $\widetilde{\pi}$ are centers; and the index $1$ critical points are  hyperbolic points. Each level set of $\widetilde{\pi}$ has at most one critical point, and there are no leaves connecting hyperbolic points. Although closed leaves may arise,  one may eliminate  them via a bigger isotopy of the surface \cite{IK1}. 

\begin{figure}[h]
\begin{center}
	\labellist
		%\pinlabel $e$ at 133 301
		\pinlabel $\bullet$ at 120 318
		%\pinlabel $c$ at 527 285
		\pinlabel $\bullet$ at 510 300
		%\pinlabel $h$ at 811 303
		\pinlabel $\bullet$ at 793 310
	\endlabellist
\includegraphics[scale=0.3]{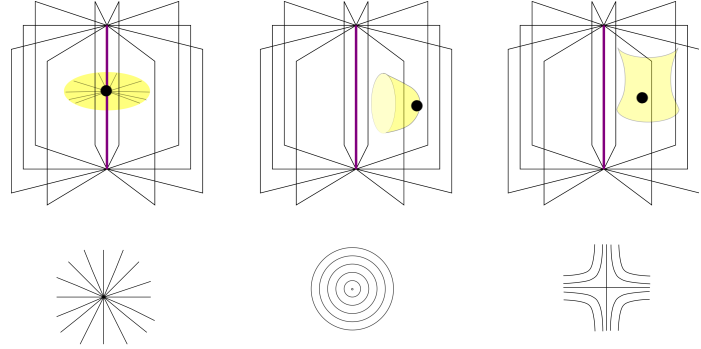}
\caption{The intersection of $F$ with the pages and binding (above) induces the singularity of $\mathcal{F}_\pi$ (below).  Left: the foliation on a disk transverse to the binding has an elliptic point.  Center: the foliation on a cup with one point tangent to a page has a center.  Right: the foliation on a saddle with one point tangent to a page has a hyperbolic point.}\label{fig:braidfoliation}
\end{center}
\end{figure}

As above, we can associate signs to the elliptic points depending on whether the binding coorients $F$ or not, whereas the sign of a critical point of $\widetilde{\pi}$ is given by the sign of $d\pi$ evaluated on the normal to $F$.  Just as characteristic foliations on convex surfaces determine the nearby contact structure, open book foliations determine the open book decomposition near the surface.

%If the three-manifold $M$ has both an open book decomposition $(B,\pi)$ and a contact structure $\xi$ supported by this open book, then a sufficiently generic surface will carry both a charactersitic foliation $\mathcal{F}_\pi$  and an open book foliation $\mathcal{F}_\xi$. A priori these foliations are unrelated, but if a surface has an open book foliation with no circle leaves, then it can be isotoped without ever changing the combinatorics of its open book foliation so that the characteristic foliation becomes topologically equivalent (has the same combinatorics) as the open book foliation. Moreover we can also assume, that the singular points are the same \vv{reference to IK}. This is the key observation that gives the boundary  criteria for foliated open books. 

  \subsection{Foliated open books}
\label{ssec:fob}
Intuitively, a foliated open book is the structure on a manifold with boundary formed by cutting a classical open book along a surface with an open book foliation. We consider two examples of this sort before carefully stating a definition in parallel to Definition~\ref{def:aob}.

\begin{example}\label{ex:orange}

Consider the open book for $S^3$ described above with  connected binding and disk pages.  Choose a neighborhood of a point on the binding and cut $S^3$ along the boundary of this ball as shown in the center of Figure~\ref{fig:orange}. Discarding the complement of this ball, one sees that it inherits a binding and pages from the original open book, and that the new boundary is naturally equipped with the foliation whose leaves are boundary intervals of the pages.  This is the simplest possible foliated open book. 

For an example that is one step more interesting, cut $S^3$ along a pair of parallel spheres to get a thickened sphere that intersects the binding in two intervals.  The complement of these binding intervals is a union of rectangles.  

We will see more interesting examples after the formal definition.

\begin{figure}[h]
	\begin{center}
		\includegraphics[scale=0.6]{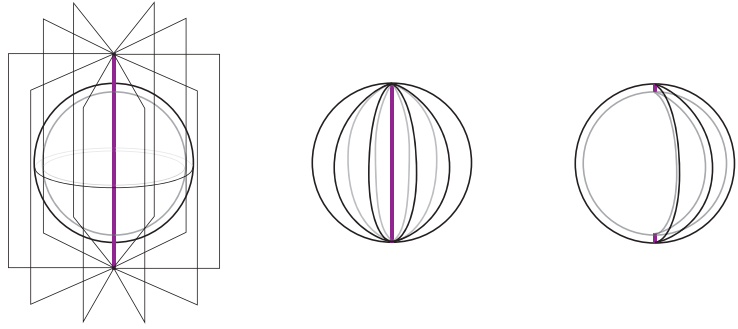}
		\caption{Cutting the open book $(D^2, \id)$ for $S^3$ (left) along a pair of parallel spheres yields a (pair of) foliated book(s) for the three-ball(s) (center) and a foliated open book for the thickened sphere (right, selected pages shown).  On the boundary spheres of the resulting foliated open books, each leaf of the open book foliation is a line of longitude, and the only singularities are the two elliptic points at the poles.}\label{fig:orange}
	\end{center}
\end{figure}

\end{example}

 \begin{definition}\cite[cf. Definition 3.12]{LV}\label{def:afob} An \emph{abstract foliated open book} is a tuple $(\{S_i\}_{i=0}^{2k}, h)$ where $S_i$ is a surface with 
boundary $\partial S_i=B\cup A_i$ \footnote{By a slight abuse of notation we denote the ``constant" part of the boundary of $S_i$ by $B$ for all $i$.} and corners at $E=B\cap A_i$ such that 
\begin{enumerate}
\item for all $i$, $A_i$ is a finite union of intervals and $B$ is a union of intervals or circles;
\item the surface $S_{i}$ is obtained from $S_{i-1}$ by either 
\begin{itemize}
\item attaching a 1-handle along two points on $A_{i-1}$, or 
\item cutting $S_{i-1}$ along a properly embedded arc $\gamma_{{i}}$ with endpoints in $A_{{i-1}}$ and then smoothing.\footnote{The indices of $\gamma_i$ in this paper are shifted compared to \cite{LV}, where the cutting arcs were denoted by $\gamma_{i-1}$.}
\end{itemize}
\end{enumerate}

Furthermore,  $h \colon S_{2k}\rightarrow S_0$ is a diffeomorphism between cornered surfaces that
preserves $B$ pointwise. 
\end{definition}

We invite the reader to pause and compare Definitions~\ref{def:aob} and \ref{def:afob}.  The latter has  two levels of complexity not seen in the classical definition: first, the definition replaces a single surface $S$ with a family  of  surfaces $S_i$ of distinct topological type, and second, the boundary of each surface is partitioned into $A_i$-intervals and $B$-intervals or -circles.  This second feature was seen already in Example~\ref{ex:orange}: cutting each page in the open book for $S^3$  along its intersection with the sphere resulted in two new bigon pages each bounded by an $A_i$-interval and a $B$-interval.  

\begin{example}
To illustrate the differences between classical and foliated open books, we consider a further example built by cutting the standard open book for $S^3$ along a separating $S^2$; See Figure~\ref{fig:benttorus}.   Here, the intersections between the indicated ball and the pages of the original open book are not all homeomorphic.  The points on the embedded $S^2$ where the changes in topological type occur are labeled by squares on the figure; the right-hand side of the figure shows the distinct subsurfaces (the pages of the resulting abstract open book), labeled to match the (embedded) pages in the original open book.

\begin{figure}[h]
	\begin{center}
		\includegraphics[scale=0.8]{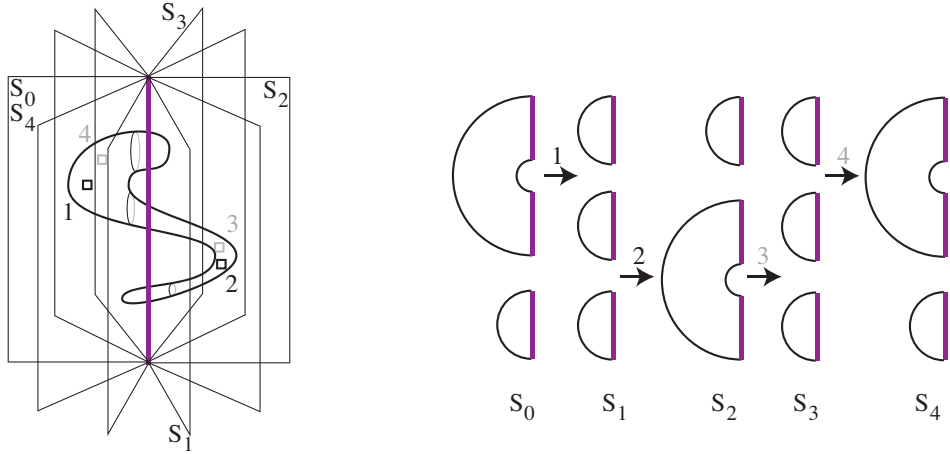}
		\caption{ A different foliated open book for a ball cut from $S^3$. }\label{fig:benttorus}
	\end{center}
\end{figure}
\end{example}

We now take on the full complexity of Definition~\ref{def:afob} and describe how to build a manifold from a sequence of pages of distinct topological types.   Throughout, we will use subscript indices to distinguish topologicially distinct page types, referring to these as ``abstract pages'' for convenience.

Any pair of consecutively indexed abstract pages $S_i$ and $S_{i+1}$ defines an elementary cobordism.  We build an analogue of the mapping torus by concatenating these elementary cobordisms and gluing $S_{2k}$ to $S_0$ by the map $h$.   More precisely, each abstract page $S_i$  yields a product $S_i\times \big(\frac{ i}{2k}, \frac{ i+1}{2k}\big)$, for $0\leq i\leq 2k-1$, and consecutive products join smoothly along a singular page which is surface with two points on its boundary identified. (Since $h:S_{2k}\rightarrow S_{0}$ is  a diffeomorphism,  we need not assign a separate product to $S_{2k}$).   After collapsing $B\times S^1$ to a multicurve called the \emph{binding} and still denoted by $B$, the remaining boundary is decorated by the non-binding boundaries of the pages.  With the above decomposition in mind, define a  function $\pi\colon M\setminus B\to S^1$ so that on each piece $S_i\times \big(\frac{ i}{2k}, \frac{ i+1}{2k}\big)$,  the function is projection to the second coordinate; here $S^1$ is identified with $[0,1]/(0\sim 1)$. Below, we abuse notation a couple of times and write $S_t$ for $\pi^{-1}(t)$.

This construction induces a singular foliation $\mathcal{F}$ on $\partial M$ whose regular leaves are copies of $A_i$, oriented as the boundary of the pages, and whose singular leaves each contain a single four-pronged hyperbolic point.  Equivalently, leaves are level sets of the restriction of the function $\pi$ to $\partial S$.  The elliptic points $E$ and the hyperbolic points $H$ each come with signs: each interval component of $A_i$ is oriented from a positive elliptic point towards a negative one.  Hyperbolic points associated to attaching a one-handle are negative, while hyperbolic points associated to cutting along an arc are positive; for an illustration of the latter, refer to Figure~\ref{fig:add}.

We denote the resulting smooth object by the triple $(M, \partial M, \mathcal{F})$ and call it a \emph{manifold with foliated boundary}.  We remember the identification of leaves with intervals on the boundary of abstract pages, and, in particular, the foliation has a distinguished union of $0$-leaves, which are always regular. Because there are no closed leaves or saddle-saddle connections, we may use the signs of the singular points to associate a  dividing set to the foliation: as seen in Section~\ref{sec:charfol}, the boundary of a neighborhood of the positive separatrices of positive hyperbolic points is a dividing set, and this is  unique up to isotopy transverse to the leaves.  Note that a manifold with foliated boundary does not have an associated foliated open book structure; rather, it has a boundary foliation that is compatible with the existence of a foliated open book.

We conclude with one more topological example before turning attention to the relationship between open books and contact structures. 

%\begin{example}\label{ex:s3-hopf} 
%\marginpar{\ip{mention the two results are tight and OT $S^3$.}} Consider the open book decomposition for $S^3$ obtained by positive (resp.\ negative) stabilization of its basic open book decomposition with disk pages and connected binding. The binding of this  open book decomposition is a positive (resp.,\ negative) Hopf link, denoted by $H_+$ (resp.\ $H_-$), and the  pages are annuli. Moreover, its monodromy is a positive (resp.,\ negative) Dehn twist along the core of the annulus. We will picture it by considering the genus one Heegaard splitting of $S^3=H_1\cup_{\bdy}(-H_2)$ into two solid tori, where $H_+$ (resp. $H_-$) is embedded on the Heegaard torus $T$ as in the left (resp. middle) picture of Figure~\ref{fig:HopfOB}, while $\pi^{-1}([0,\pi])=H_1$ and $\pi^{-1}([\pi,2\pi])=H_2$. Moreover, consider the embedded sphere $S^2$ which is the boundary of a neighborhood of the blue arc in $\pi^{-1}(\pi)$ connecting  two components of the binding as in Figure \ref{fig:HopfOB}. The open book foliation on this sphere has four elliptic points and two hyperbolic points as in the right picture of Figure \ref{fig:HopfOB}.

\begin{example}  \label{ex:fobfromfol} For a final example in this section, we describe a process for promoting a nicely foliated surface $F$ to a foliated open book $F\times I$ with the property that the open book foliation on each $F\times \{s\}$ is isotopic to the original foliation.  This procedure is described in detail in \cite[Section 4.2]{LV}, but we summarise it here for later use in this article. 

The open book decomposition near a surface $F$ is completely determined by the open book foliation $\mathcal{F}_\pi$ on $F$ \cite[Corollary 4.6]{LV}. In the following,  we describe this local structure by constructing a foliated open book for $F\times [-1,1]$ that embeds into any other (foliated) open book inducing $\mathcal{F}_\pi$.   Naively, one might try to cross the original surface with $[-1,1]$ and take the pages to be the products of leaves with the interval.  This works in the case of a foliation with only elliptic singularities, as in Example~\ref{ex:orange}, but the process is more subtle in the case that the original foliation has hyperbolic points.  

We first briefly describe the open book determined by $\mathcal{F}_\pi$ near $F$ before using the foliation to construct its abstract pages. The binding of the open book is transverse to $F$, so we can assume it embeds as $E\times I$ in $F\times I$, oriented  by $\frac{\partial}{\partial s}$ (respectively, $-\frac{\partial}{\partial s}$) for positive (negative) elliptic points. Recall that $\Gamma$ denotes the dividing set for a signed foliation.  Away from a neighborhood of $\Gamma\times I$,  each page $S_t$ is the union  of the leaves  $\widetilde\pi^{-1}(t\pm\varepsilon s) \times \{s\}\subset F\times\{s\}$, where the sign depends on whether we are in $F_+$ or $F_-$, and $\varepsilon$ is sufficiently small so that no page contains more than one hyperbolic point.  We connect these across $\Gamma\times I$ by bands which twist to compensate for the shearing of leaves in opposite directions as $|s|$ increases.  (Figure 8 in \cite{LV} provides local models  for this construction near $\Gamma$ and $E$.)

As noted above, when $t$ is not near a singular point, this yields pages which are simply thickened copies of the original interval leaves; when $F$ is closed, these are rectangular pages with two binding intervals separating a pair of leaves, one on each of $F\times\{\pm 1\}$.  This is illustrated by the thickened sphere in Example~\ref{ex:orange}.  

To see what happens near a singular value $t_0$ for the original foliation, consider the page which contains the corresponding hyperbolic point on $F\times\{0\}$.  The boundary of this page on $F\times\{-1\}$ is a copy of the $\pit^{-1}(t_0-\epsilon)$ leaf in which the saddle resolution has not yet happened, while the boundary of this page on $F\times \{1\}$ is a copy  of the $\pit^{-1}(t_0+\epsilon)$ leaf where the saddle resolution has already occurred.  This gives a recipe for writing down abstract pages: starting from the regular value $0$, set $S_0= \pit^{-1}(0)\times I$.  To form $S_1$, perform the first cut/add operation on the corresponding $F\times \{1\}$ edges of $S_0$; to form $S_2$, perform the corresponding add/cut operation on the $F\times \{-1\}$ edges of $S_1$. Note that $S_2$ can be thought of as $\pit^{-1}(t_0+\epsilon)\times I$, where $t_0$ is the first singular value encountered after $0$. We can continue to obtain a pair of pages for each hyperbolic point in the same way. If the original foliation had $n$ hyperbolic points, the new foliated open book will  therefore have $2n+1$ pages.  Each even-indexed page is a thickened regular leaf, while odd-indexed pages interpolate between these. Finally, note that the monodromy $h$ will always be trivial, as the first and last pages  are simply unions of disks.

\end{example}

\subsection{Foliated open books and contact structures}\label{sec:fobcontact}
With the topological constructions well in hand, we are ready to recall the compatibility between foliated open books and contact structures.

 \begin{definition}\label{def:supp} \cite[Definition 3.7]{LV} The abstract foliated open book $(\{S_i\}, h)$ \emph{supports} the contact structure $\xi$ on $(M, \partial M, \mathcal{F})$ if  \begin{enumerate}
 \item $TB$ is positively transverse to $\xi$; 
 \item there exists a nowhere zero vector field everywhere transverse to the interior of each page and to $\xi$ whose flow preserves $\xi$;
  \item there is a topological isotopy of $\partial M$ taking $\mathcal{F}$ to the characteristic foliation $\mathcal{F}_{\xi}$ such that some $\Gamma$ is a dividing set for each foliation throughout the isotopy. 
 \end{enumerate}
 \end{definition}

%  \begin{definition}\label{def:supp} \cite[Definition 3.7]{LV} The abstract foliated open book $(\{S_i\}, h)$ \emph{supports} the contact structure $\xi$ on $(M, \partial M, \mathcal{F})$ if  \begin{enumerate}
% \item $TB$ is positively transverse to $\xi$; 
% \item there exists a nowhere zero vector field everywhere transverse to the interior of each page and to $\xi$ whose flow preserves $\xi$;
 % \item there is a topological isotopy of $\partial M$ taking $\mathcal{F}$ to the characteristic foliation $\mathcal{F}_{\xi}$ such that some $\Gamma$ is a dividing set for each foliation throughout the isotopy. 
% \end{enumerate}
% \end{definition} 
 
 We will often want to consider a manifold with foliated boundary $(M, \partial M, \mathcal{F})$  together with a contact structure $\xi$ supported by a foliated open book inducing the boundary foliation; we call this a 
  \emph{foliated contact three-manifold} and denote it by the triple $(M, \xi, \fol)$.
  
  As above, we may ignore the third condition to recover the classical definition of a contact structure supported by an open book decomposition of a closed manifold.  
  If a three-manifold $M$ has both an open book decomposition $(B,\pi)$ and a contact structure $\xi$ supported by this open book, then a sufficiently generic surface will carry both a characteristic foliation $\mathcal{F}_\pi$  and an open book foliation $\mathcal{F}_\xi$. A priori these foliations are unrelated, but if the open book foliation has no circle leaves, then  the contact structure
  can be isotoped so that the characteristic foliation and the open book foliation have the same combinatorics and further, that the singular points agree  \cite{IK1}.  This is the key observation that gives the boundary  criteria for foliated open books.

In the examples produced by cutting an honest open book along a separating surface, observe that the open book foliations on the two new boundaries match, but with the signs of all singular points reversed.  Conversely, any two foliated open books with matching, sign-reversed boundary foliations may be glued to produce a closed manifold with an open book structure.  In fact, these cutting and gluing results respect the contact structures supported by each of the open books in the sense of Definition~\ref{def:supp} \cite[Theorems 6.1, 6.2]{LV}.   
  
In the remainder of this section, we construct several additional foliated open books for specific contact manifolds.  Example~\ref{ex:s3-hopf} constructs foliated open books for a pair of distinct contact structures on the three-ball.  In this case, as in the examples above, the foliated open books are identified as submanifolds of an open book for a closed three-manifold.  Finally, Example~\ref{ex:ot-sfob} is a specific  instance of the procedure described in Example~\ref{ex:fobfromfol} above; we endeavor to provide a plausible construction here while referring the reader to \cite{LV} for the technical details.
  
  \begin{example}\label{ex:s3-hopf} Different open book decompositions of a fixed topological manifold may determine different contact structures, and  the same holds true in the case of foliated open books.  In this example we consider a pair of open books for $S^3$, one of which supports the unique tight contact structure and the other of which  supports an  overtwisted contact structure.  Cutting each of these along a separating $S^2$ yields foliated open books for tight and overtwisted balls. 
    
Let $(A,  h^\pm)$ denote the  open book  for $S^3$ with annular  pages and monodromy a single Dehn twist of the indicated sign. The binding of the associated open book decomposition is a positive (resp.\ negative) Hopf link, denoted by $H^+$ (resp.\ $H^-$).  To  picture this, consider the genus one Heegaard splitting of $S^3=H_1\cup_{\bdy}(-H_2)$ into two solid tori where $H^+$ (resp. $H^-$) is embedded on the Heegaard torus as in Figure~\ref{fig:HopfOB}.  Here $\pi^{-1}([0,\frac{1}{2}])=H_1$ and $\pi^{-1}([\frac{1}{2},1])=H_2$. The positive twist monodromy  induces the tight contact structure on $S^3$, while the negative twist monodromy induces an overtwisted structure.  

\begin{figure}[h]
	\begin{center}
		\includegraphics[scale=0.6]{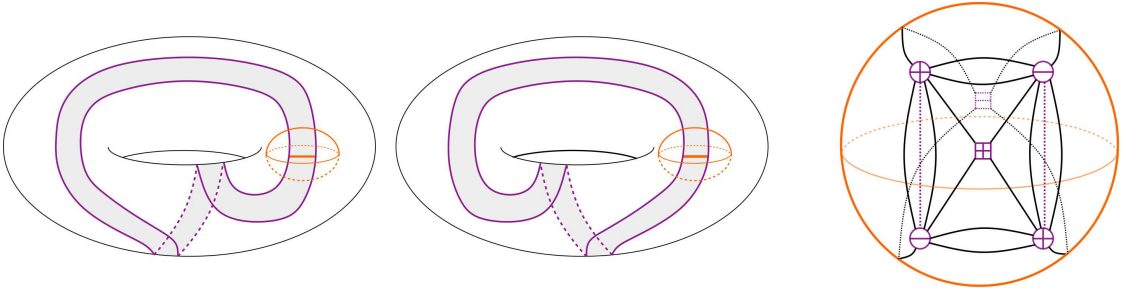}
		\caption{ Left: $H^+$ embedded in $\partial H_1$ in a  tight $S^3$. Center: $H^-$ embedded in $\partial H_1$ in an  overtwisted $S^3$.  Right: the open book foliation on the boundary of a neighborhood of the spanning arc in the shaded annulus; as in Figure~\ref{fig:diffol}, elliptic points are drawn as circles, and hyperbolic ones as squares.}\label{fig:HopfOB}
	\end{center}
\end{figure}

In each of  these  open books, consider the embedded $S^2$ bounding  a neighborhood of the orange arc in $\pi^{-1}(\frac{1}{2})$ shown in Figure \ref{fig:HopfOB}.   Discard this ball, leaving a pair of foliated open books for the complementary tight and overtwisted balls.  The open book foliation on the boundary sphere has four elliptic points and two hyperbolic points as in the right hand picture of Figure \ref{fig:HopfOB}.  The pages of these foliated open books are shown as the shaded subsurfaces in Figure ~\ref{fig:3ball-h}.   The Dehn twists from the original open book restrict to Dehn twists on the annular pages of the foliated open books.

 %Cutting along the sphere boundary of a neighborhood of the \jel{orange} arc splits the open book $(S,h_{\pm})$ for $S^3$ into two foliated open books for $N_{\pm}$ (the neighborhood of the blue arc) and $M_{\pm}$ (the closure of the complement ball). The abstract foliated open book for $M_{\pm}$ consists of the three pages $S_0 = \pi^{-1}(0)\cap M_{\pm}$, $S_1 = \pi^{-1}(\pi)\cap M_{\pm}$, and $S_2 = \pi^{-1}(2\pi)\cap M_{\pm}$, and has monodromy $\tau_{\pm}\coloneqq h_{\pm}\vert_{S_2}$; see Figure~\ref{fig:3ball-h}. 

\begin{figure}[h]
	\begin{center}
	\labellist
		\pinlabel $S_0$ at 70 -15
		\pinlabel $S_1$ at 270 -15
		\pinlabel $S_2$ at 470 -15
	\endlabellist
		\includegraphics[scale=0.62]{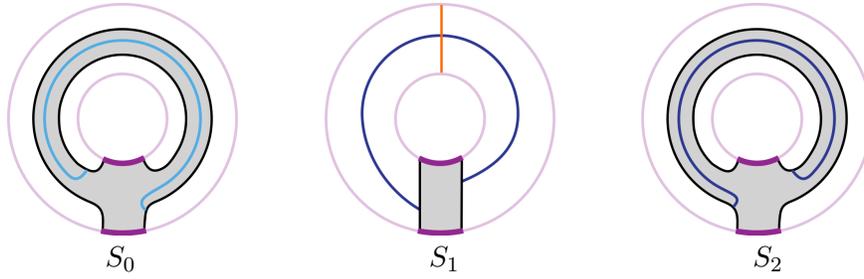}
		\vspace{.3cm}
		\caption{Removing a neighborhood of the orange arc from $S^3$ yields the shaded pages for a foliated open book for the three-ball.  Each abstract page is shown  embedded into an annular page for the open book $(A, h^\pm)$,  where $h^\pm$ is a positive (resp.\ negative) Dehn twist around the core of the annulus.  These twists restrict to the cornered annulus $S_2$ as the monodromy for the foliated open books $(\{S_0, S_1, S_2\}, h^+)$ for a tight three-ball and $(\{S_0, S_1, S_2\}, h^-)$ for an overtwisted three-ball.  The light and  dark blue curves are sorting arcs, which are introduced in Section~\ref{ssec:sortfob}.}\label{fig:3ball-h}
	\end{center}
\end{figure}
\end{example}

%\begin{definition} The \emph{reverse of a foliation $\fol$} on $F$ is the foliation on $-F$ which is equal to $\pit$ pointwise, but with the leaf orientations and signs of singular points reversed.  \end{definition}

%The main advantage of foliated open books is that they are well suited for gluing. Suppose that  $(M^L,\xi^L,\mathcal{F}^L)$ and $(M^R,\xi^R,\fol^R)$ are foliated contact three-manifolds such that there exists  an orientation-reversing diffeomorphism $\psi\colon \partial M^L  \to \partial M^R$ that maps the reverse of the foliation $\fol^L$ onto $\fol^R$. 

%Recall that by hypothesis, the boundary foliations  are  divided, so the boundaries $\partial M^{\bullet}$ are convex with respect to $\xi^{\bullet}$. (Here, and throughout this section, $\bullet$ will be an element of the set $\{R, L\}$.) Since the contact structures are $I$-invariant near the boundaries, $\psi$ determines a closed contact three-manifold \[(M,\xi)=(M^L\cup_\psi M^R,\xi^L\cup_\psi\xi^R).\]  

\begin{example}  \label{ex:ot-sfob} In this example we construct a foliated open book for a ball supporting an overtwisted contact structure.  This example is borrowed from \cite{LV2}, following the procedure  summarized in Example~\ref{ex:fobfromfol}.  The motivation for including this intially-opaque construction is that it will allow us to characterise any foliated open book for an overtwisted contact manifold in terms of a particular embedded foliated open book.  To begin this process, we introduce a non-standard definition of overtwistedness:

\begin{definition}\label{def:ot} A contact manifold $(M, \xi)$ is \emph{overtwisted} if it contains an embedded disk whose boundary is everywhere transverse to  $\xi$ and whose characteristic foliation is as shown in Figure~\ref{fig:ottopbot}.
\end{definition}

\begin{figure}[h]
		\begin{center}
			\includegraphics[scale=0.8]{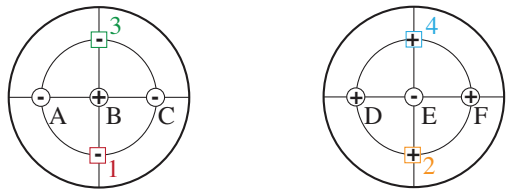}
			\caption{The left-hand picture shows the top of an overtwisted disk with transverse boundary; elliptic and hyperbolic points are again drawn as circles and squares, respectively. The right-hand picture labels the singularities of the characteristic foliation on the underside of the disk; the two points in the pairs $(A,D)$, $(B,E)$, and $(C,F)$ coincide,  but the sign of each singular point is reversed when viewed from the opposite side.}\label{fig:ottopbot} 
		\end{center}
	\end{figure}

Overtwistedness is more commonly characterized in terms of the existence of an embedded disk with a different characteristic foliation, but it's a consequence of  Giroux Flexibility  that the existence of a disk with this foliation is equivalent to the existence of disks with related characteristic foliations. % --- like cockroaches, you never have just one overtwisted disc. 
We choose Definition~\ref{def:ot} with  a later application in mind.  We now apply the construction sketched in Example~\ref{ex:fobfromfol} to build a foliated open book for a neighborhood of this disk; it follows that inside any overtwisted contact manifold, we may find an overtwisted ball that admits this foliated open book. 

 The existence of a transverse boundary requires us to slightly modify the construction, smoothing the boundary of pages associated to leaves that terminate on $\partial F$.  Thus a regular leaf connecting an elliptic point $e$ to  $\partial F$ will give rise to a bigon with one  $e \times [-1,1]$ component and one $A_i$ component connecting $e \times\{\pm1\}$.

We now apply this construction to the overtwisted disk shown in Figure~\ref{fig:ottopbot}, yielding an abstract foliated open book with five abstract pages.  We set $t=0$ to consist of the leaves where intervals connect (1) elliptic points   $A$ and $B$, and (2) elliptic point $C$ to the boundary. The first leaf becomes a rectangular page with two boundary intervals, one connecting $A\times\{1\}$ and $B\times\{1\}$ and the other connecting $D\times \{-1\}$ with $E\times \{-1\}$.  The leaf connecting $C$ to the boundary becomes a bigon whose boundary interval connects $C \times \{1\}$ with $F \times \{-1\}$.  See Figure~\ref{fig:stab1-1}. Around a positive elliptic point, $t$ increases in the positive direction; following the procedure outlined in Example~\ref{ex:fobfromfol}, the first hyperbolic singularity corresponds to adding a handle to connect these two pages.  Figrue~\ref{fig:stab1-1} shows all the abstract pages of the resulting foliated open book.

	\begin{figure}[h]
		\begin{center}
			\includegraphics[scale=0.8]{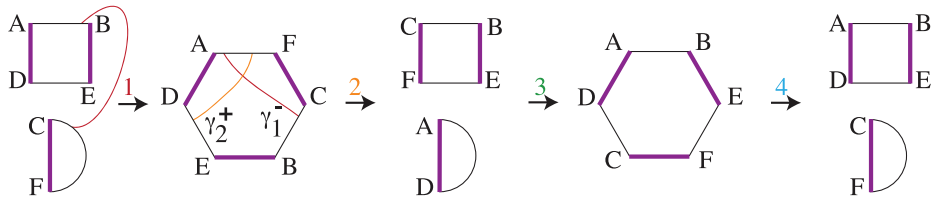}
			\caption{The pages of a foliated open book for a neighborhood of the disk in Figure~\ref{fig:ottopbot}.  Each hyperbolic point in the original foliation induces a pair of  hyperbolic points of opposite sign in the foliated open book.  (The labeled arcs are explained in Example~\ref{ex:ot-sfobstab}.)}\label{fig:stab1-1}
		\end{center}
	\end{figure}

	Since each component of each page is a disk, there is a unique (up to isotopy) way to identify successive pages, and the foliated open book is completely determined by this data. One may also reconstruct the dividing set on the ball.  One component encircles $B$ on the ``top'' of the ball, while two further components bound an annulus containing $D$, $F$, and the two positive hyperbolic points on the ``bottom'' of the ball.  In contrast, the foliated open book for the overtwisted ball constructed in Example~\ref{ex:ot-sfob} has a connected dividing set.
	
To illustrate how this ball might embed in an overtwisted contact manifold, we consider the open book for an overtwisted $S^3$ from Example~\ref{ex:s3-hopf}.  Recall that the pages are annuli and the monodromy is a left-handed Dehn twist.  The top half of Figure~\ref{fig:embotdisc} shows a ball intersecting two representative pages of this open book.  The elliptic points are labeled to identify these subsurfaces with the first and third abstract pages from Figure~\ref{fig:stab1-1}; although we find it difficult to visualise further pages embedded in $S^3$, it is not difficult to embed the foliated open book pages in abstract pages, as shown below.

			\begin{figure}[h]
		\begin{center}
			\includegraphics[scale=0.8]{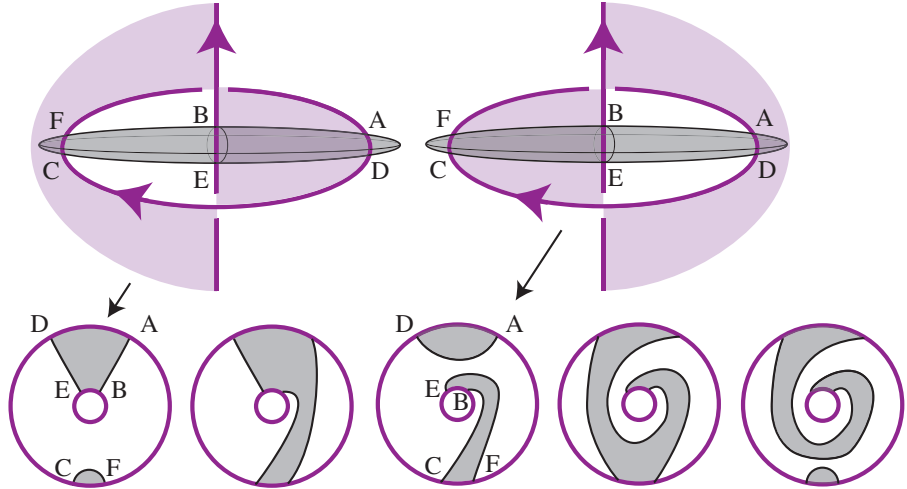}
			\caption{The foliated open book for the minimal overtwisted ball embeds in the simplest  open book for an overtwisted $S^3$.  The monodromy is a left-handed Dehn twist.}\label{fig:embotdisc}
		\end{center}
	\end{figure}	
\end{example}

\subsection{Sorted foliated open books}\label{ssec:sortfob}
Foliated open books will be our means to associating a Floer-theoretic invariant to a three-manifold with foliated boundary.  However, in order to generate a multipointed Heegaard diagram, we will need to require the further technical condition that our foliated open book is \emph{sorted}.
 Since the notation to verify this condition is somewhat involved, we pause to motivate it first.

 The definition of a foliated open book  requires successive pages to evolve by cutting or by gluing, but we may equivalently think of this as the condition that evolution is always by addition, but in either direction: either $S_i$ is obtained from $S_{i-1}$ by a one-handle addition or else $S_{i}$ is obtained from $S_{i+1}$ by a one-handle addition.   One-handles associated to negative hyperbolic points are those already described in Definition~\ref{def:afob} as ``adds",  while positive hyperbolic points correspond to adding a handle as the page index decreases.  We will call a foliated open book \textit{sorted} if a one-handle, after being added with respect to some direction (i.e., increasing or decreasing indices), persists for all subsequent pages with respect to that direction.  See Figure~\ref{fig:add}.

\begin{figure}[h!]
		\begin{center}
			\includegraphics[scale=0.63]{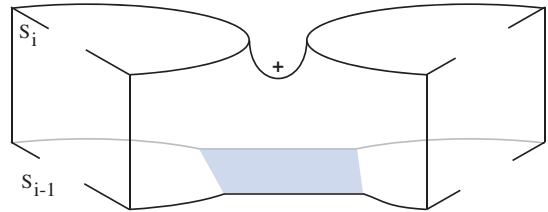}
			\caption{ Here $S_{i-1}$ is obtained from $S_i$ by adding the shaded one-handle, inducing a positive hyperbolic singularity at the saddle point. The sorted condition requires that this handle persist for all $S_j$ with $0\leq j<i$.  Note that the binding has been blown up as $B\times I$ for ease of viewing.}\label{fig:add}
		\end{center}
	\end{figure}
	
 Recall that the elliptic points $E=A_i\cap B$ partition as $E=E_+\sqcup E_-$, where each interval is oriented from a point  $e_+\in E_+$ to a point $e_-\in E_-$. We impose the following conventions on the cutting and gluing arcs that govern how the pages evolve:

\begin{itemize}
\item If $S_i\rightarrow S_{i+1}$ cuts $S_i$ along  a properly embedded arc, the endpoints of the arc  lie near the $e_+$ end of the intervals of $A_i$.  We decorate $S_i$ and all prior pages with a copy of the cutting arc and label these arcs as $\gamma^+_i$.  If $S_j$ is decorated with multiple cutting arcs  near the same point $e_+$, the indices decrease with the orientation of $A_j$.
\item If $S_i\rightarrow S_{i+1}$ adds a one-handle to $S_i$, the points of the attaching sphere separate any $\gamma^+$ endpoints from the $e^-$ on the intervals of $A_i$.  We decorate $S_{i+1}$ and all subsequent pages with  a copy of the cocore of the attached one-handle and label these arcs as $\gamma^-_i$.  If $S_j$ is decorated with multiple cocores  near the same point $e_-$, the indices decrease with the orientation of $A_j$.
\end{itemize}
 
 If we take the perspective that gluing is simply cutting in with the order of the indices reversed, then the second bullet point can be phrased in identical language to the first.  Figure~\ref{fig:sortconv} illustrates these conventions in an example.
 
\begin{definition}\label{def:sort}
 A foliated open book is \emph{sorted} if the arcs $\gamma^-\cup \gamma^+$ are mutually disjoint on all the pages where they appear.  We denote a sorted foliated open book by $(\{S_i\}_{i=0}^{2k},h, \{\gamma_i^\pm\})$.
\end{definition}

%\jel{I thought this was useful, the referee, less so.  Cut next two paragraphs?} \ip{We use $P_{\epsilon}$ and $\iota$ in the next section, when we define cutting arcs. If we add the term ``ghost page" there, it may appease the referee, who thought we never discuss that page later. Also, here may be where we add V's rectangle schematic of how the FOB evolves: the cobordisms and $\iota$.}
A foliated open book which is sorted has a ghost page: a minimal surface formed by cutting along all of the arcs.  Although this surface may not actually coincide with any $S_i$ in the foliated open book, it embeds as a  subsurface in each abstract page. Keeping  this in mind may help in understanding the following notation-heavy definition.

Suppose $(\{S_i\}_{i=0}^{2k},h,\{\gamma_i^\pm\})$ is a sorted foliated open book for foliated contact three-manifold $(M, \xi, \fol)$.
On each page $S_i$, let $P_i$ be the complement of a ``cornered'' neighborhood of $A_i \cup(\bigcup_{i < j}\gamma_j^+)\cup(\bigcup_{i\geq j}\gamma_j^-)$, with corners at $E$.  This $P_i$ is the ghost page and exists as subsurface of each $S_i$.  The copies of $P_i$ may be identified via the flow of a vector field transverse to the pages, and we denote the composition of these identifications from $P_0\subset S_0$ onto $P_{2k}\subset S_{2k}$ by $\iota$.

\begin{figure}[h]
		\begin{center}
			\includegraphics[scale=0.79]{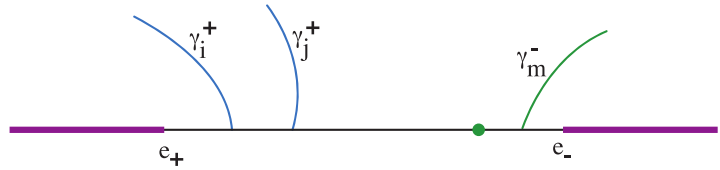}
			\caption{ An indicative interval of $A_n$.  Here $i>j\geq n >m$.  The arcs $\gamma_i^+$ and $\gamma_j^+$ show arcs that will be cut along on higher-index pages.  The bold dot indicates where a one-handle could be attached on some later page, while the arc $\gamma_m^-$ is the cocore of a handle already attached. }\label{fig:sortconv}
		\end{center}
	\end{figure}

\subsection{Sorting by stabilization}
\label{ssec:sfob}
In this subsection we examine the operation of positive stabilization on a foliated open book and show how it can be used to render a non-sorted foliated open book sorted.  The idea is straightforward: each stabilization adds a one-handle to every page of the foliated open book by taking a connect sum with a foliated open book for the standard tight $S^3$.  For a simple example, we note that the foliated open book for a tight three-ball constructed in Example~\ref{ex:s3-hopf} is a positive stabilization of the foliated open book from Example~\ref{ex:orange}.

The number of sorting arcs $\gamma^{\pm}$ is controlled by the foliation, and hence unchanged by stabilization.  Repeating the process sufficiently many times gives the sorting arcs  more space in the enlarged page to avoid each other.   Of course, the arcs that guide the stabilization must be chosen carefully, and we explain how to do this below.  The formal proof that this is always possible may be found in \cite{LV}.  

As shown in \cite{LV}, stabilization may be understood as a concrete example of gluing two foliated open books.  Choose an arc $(\gamma, \partial \gamma)$  embedded  in a fixed page $(S_t, B)$ of a foliated open book.    A regular neighborhood of this arc may be chosen so that its boundary is a sphere whose signed singular foliation has two hyperbolic singularities.   Choosing such neighborhoods in two separate foliated open books yields manifolds with matching foliated boundaries. Since we can only glue foliations where the singularities match, but with opposite signs, shifting the $t$ coordinate by $\frac{1}{2}$ allows us to glue the two spheres to construct a foliated open book for the connect sum of the two original manifolds; the new pages are Murasugi sum of the pages of the original foliated open books. If one of the manifolds was an open book with annular pages supporting the tight contact structure on $S^3$, then the contactomorphism type of the manifold is unchanged and we say that the foliated open book has been \emph{positively stabilized}. The open book in Example~\ref{ex:s3-hopf} with positive Hopf twist binding is a stabilization of the elementary  open book for $S^3$ from Example~\ref{ex:orange}.

The description above applies with minor modification to all versions of open books, but a distinguishing feature of foliated open books is the non-homogeneity of the pages.  An arc on $S_t$ may not persist to some later page $S_{t'}$, or $S_{t'}$ may have a non-trivial mapping class group even though $S_t$ was a collection of disks.  This highlights that there are two choices to be made when defining a stabilization of a foliated open book: which page, and which arc?

With a goal of removing intersections of the form $\gamma_i^+\cap \gamma_j^-$, choose a regular page between the hyperbolic points $h_i^+$ and $h_j^-$.  We will stabilize along an arc in this page so that as $\gamma_i^+$ rises up through the manifold, the subinterval that would collide with the descending $\gamma_j^-$ picks of the monodromy of the foliated open book for $S^3$ and instead undergoes a Dehn twist around the core of the annular Murasugi summand of the page.  

\begin{example}  \label{ex:ot-sfobstab}
Example~\ref{ex:ot-sfob} introduced a foliated open book for an overtwisted ball which embeds into any overtwisted contact manifold.  Examining Figure~\ref{fig:stab1-1} more closely will show that it is not sorted, and this example will perform the sorting stabilizations.
	
	The first hyperbolic singularity is negative and corresponds to adding a one-handle to $S_0$ as shown; on the second page $S_1$, the cocore of the one-handle is recorded as an arc $\gamma_1^-$.  However, the second hyperbolic singularity is positive and corresponds to cutting the second page along the arc labelled $\gamma_2^+$ to get the third page.  As shown in the figure,  $\gamma_1^-$\ and $ \gamma_2^+$ intersect.
	
To remove this obstruction to sortedness, choose a copy of $S_1$ and stabilize along an arc that crosses $\gamma_2^+$ and $\gamma_1^-$ once.  The result is shown in Figure~\ref{fig:stab2-1}.  One can think of $\gamma_1^-$ as undergoing a right-handed twist as it ascends  or  $\gamma_2^+$ as undergoing a left-handed twist as it descends, and since the two curves now avoid each other, we may proceed with increasing $t$ until  $\gamma_3^-$ and $\gamma_4^+$ intersect on the new $S_3$ page.  
	
\begin{figure}[h]
		\begin{center}
			\includegraphics[scale=0.79]{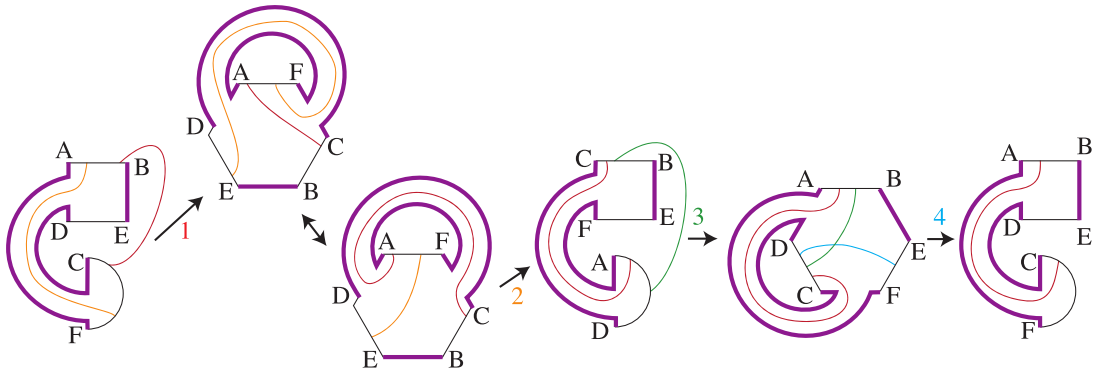}
			\caption{ The stabilization of the foliated open book from Figure~\ref{fig:stab1-1}.  Note that $\gamma_3^+$ and $\gamma_4^-$ intersect on the new page $S_3$, so the foliated open book remains unsorted. }\label{fig:stab2-1}
		\end{center}
	\end{figure}
	
	To remove the intersection $\gamma_3^-\cap \gamma_4^+$, we analogously stabilize along an arc intersecting each of these curves once.  Finally, a sorted foliated open book is seen in Figure~\ref{fig:stab2}.  Since the gluing map   is inherited from the original foliated open book, it remains translation in the page as drawn. 
	
\begin{figure}[h]
		\begin{center}
			\includegraphics[scale=0.79]{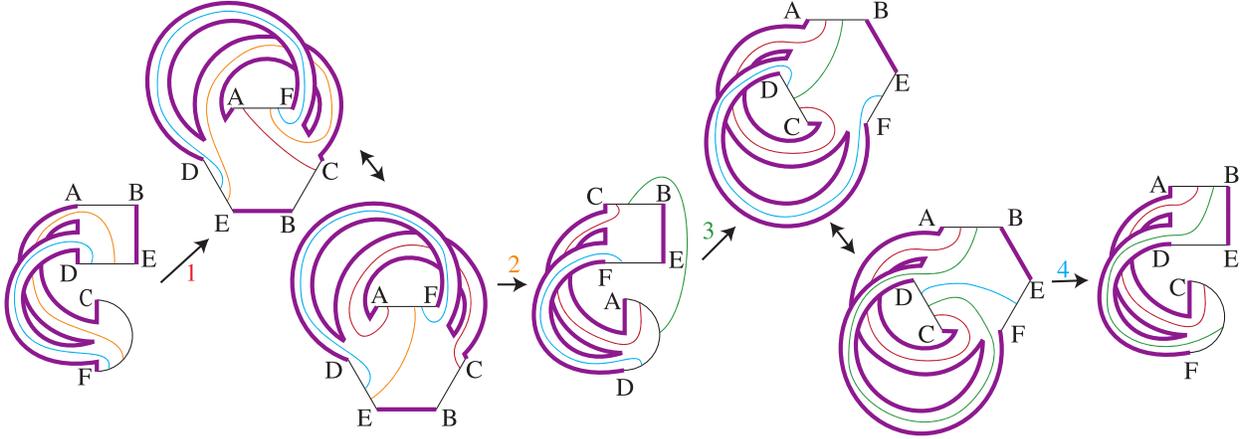}
			\caption{  A sorted foliated open book for a neighborhood of an overtwisted disk, obtained from the foliated open book in Figure~\ref{fig:stab1-1} by a sequence of two stabilizations.   }\label{fig:stab2}
		\end{center}
	\end{figure}
	
	For any $i$, cutting along all the sorting arcs on $S_i$ yields a pair of disks, the ``ghost page''  described in the previous section.
	\end{example}

  % !TEX root = ../bordered-contact-birs.tex

\section{Multipointed bordered Floer homology}
\label{sec:mpbordered}

As a stepping stone for defining link Floer homology, Ozsv\'{a}th and Szab\'{o} defined a \emph{multipointed} version of $\hfhat$ denoted by $\hftilde$ \cite{oszlink}. This version is defined using Heegaard diagrams with multiple basepoints, and, given a closed, oriented three-manifold $M$, it is related to $\hfhat(M)$ by the isomorphism
\[\hftilde(M,n)\cong\hfhat(M)\otimes V^{n-1}.\]
Here, $n$ is the number of basepoints and $V$ is a $2$-dimensional graded $\Z/2\Z$-vector space with  generators in gradings $1$ and $0$; i.e., $V\cong H_{\ast}(S^1)$.

In \cite{sfh}, Juh\'asz defines an extension of $\hfhat$ for non-closed three-manifolds whose boundary is \emph{sutured}, called sutured Floer homology.
Note that both $\hfhat(M)$ and $\hftilde(M, n)$ are sutured Floer homologies of specific sutured manifolds corresponding to $M$. Specifically, 
let $M(n)$ be the sutured manifold obtained from $M$ by removing $n$ pairwise disjoint balls and adding as a suture one oriented simple closed curve on each resulting sphere boundary component. Then, we have $\hfhat(M)\cong \sfh(M(1))$ while $\hftilde(M,n)\cong \sfh(M(n))$.

Lipshitz, Ozsv\'{a}th, and Thurston define bordered Floer homology as an extension of $\hfhat$ for three-manifolds with parametrized boundary \cite{LOTbook}. First, they associate a differential graded algebra  $\cala(\bdy M)$ to the parametrization. Then, they define an $\ainf$-module, or \emph{type $A$ structure},  $\cfahat(M)$ over $\cala(\bdy M)$, or equivalently, a \emph{type $D$ structure} (roughly,  a dg module) $\cfdhat(M)$ over $\cala(-\bdy M)$. These invariants are constructed to satisfy a nice gluing formula which recovers $\hfhat$. Specifically, if $M$ is a closed three-manifold obtained by a gluing $M_1\cup_{\bdy}M_2$, then the derived tensor product $\cfahat(M_1)\widetilde{\otimes}_{\cala(\bdy M_1)} \cfdhat(M_2)$  (which often has a smaller model denoted $\boxtimes$) is homotopy equivalent to $\cfhat(M)$.

A generalization of bordered Floer homology, called bordered sutured Floer homology, was defined by Zarev \cite{bs}. It is an invariant of three-manifolds whose boundary is ``part sutured, part parametrized". This invariant satisfies a  gluing formula which recovers sutured Floer homology.

In this section, we introduce a \emph{multipointed} theory for bordered Floer homology as a special case of bordered sutured Floer homology.  
First, we recall the definition of the boundary parametrization in bordered Floer homology. Let $M$ be a three-manifold with boundary of genus $k$. A parametrization for $\bdy M$ 
 consists of a disk $D\subset\bdy M$; a basepoint $z\in\bdy D$; and $2k$ pairwise disjoint properly embedded arcs $\amalg_{i=1}^{2k}\delta_i$ in $\bdy M\setminus \Int(D)$ such that $M\setminus\left(D\cup \amalg_{i=1}^{2k}\delta_i\right)$ is an open disk.  The parametrization data is recorded by a pointed matched circle $\zz=(Z,a,m)$, where $Z=\bdy D$ with $z\in Z$, $a=\bdy (\amalg_{i=1}^{2k}\delta_i)$ union of $4k$ points on $Z$, and $M$ is a matching on $a$ that pairs endpoints of the same arc $\delta_i$. 
 
 \begin{definition}
A \emph{pointed matched multicircle} is a triple $\zz=(Z,a,m)$ where $Z=\amalg_{i=1}^{n}Z_i$ is a union of $n$ circles with a basepoint $z_i$ on each $Z_i$, $a\subset Z$ is a set of an even number of points, and $m\colon a\to a$ is a matching. Given a three-manifold $M$ with boundary of genus $k$, a \emph{(multipointed) parametrization} of $\bdy M$  is a pointed matched multicircle $\zz$ with $|a|=4n+4k-4$, along with  an embedding of $Z$ and of pairwise disjoint arcs $\delta=\amalg_{i=1}^{2n+2k-2}\delta_i$ into $M$, satisfying the following:
\begin{enumerate}
 \item The image of each $Z_i$ bounds a disk $D_i$ in $\bdy M$ whose interior is disjoint from the arcs $\delta_i$ for all $i$.
 \item  $\bdy \delta = a$ and each $\bdy \delta_i$ is a pair of points matched by $m$.
 \item $\bdy M\setminus\left((\amalg_{i=1}^n D_i)\cup(\amalg_{i=1}^{2n+2k-2}\delta_i)\right)$ is the union of $n$ open disks such that each disk contains exactly one of the marked points $z_i$ for $i=1,\ldots, n$.
 \end{enumerate}
 We call the three-manifold with  multipointed parametrized boundary a \emph{bordered manifold}, as in \cite{LOTbook},  and denote it by $(M, \zz)$, omitting from the notation the implicit data of how the arcs $\delta_i$ are embedded on $\bdy M$.
 \end{definition}

A three-manifold with  multipointed parametrized boundary $(M, \zz)$ can be reinterpreted as a bordered sutured manifold $(M,  \bsGamma,\zz^{\circ})$ where $\amalg _{i=1}^n D_i$ is the sutured part while its complement is the parametrized part, and $\zz^{\circ}$ is the arc diagram obtained from $\zz$ by removing neighborhoods of the basepoints.  Thus, Zarev's construction associates a type $A$ structure  $\bsahat(M, \bsGamma,\zz^{\circ})$ over  $\cala(\zz)\coloneqq \cala(\zz^{\circ})$, or equivalently a type $D$ structure $\bsdhat(M, \bsGamma,\zz^{\circ})$ over  $\cala(-\zz)$. The construction uses a Heegaard diagram presentation $\HD = (\Sigma, \alphas, \betas, \zz^{\circ})$ for the bordered sutured manifold. The arc diagram $\zz^{\circ}$ is embedded on $\bdy\HD$ so that there is one interval on each component of $\bdy\HD$.  The structures $\bsahat$ and $\bsdhat$ are generated by certain sets of intersection points in $\alphas\cap \betas$ on the Heegaard diagram and they have structure maps defined by counting certain holomorphic curves in $\Sigma\times I\times \R$ whose projection onto $\Sigma$ avoids the regions of $\Sigma\setminus (\alphas\cup \betas)$ containing $\bdy\HD\setminus \zz^{\circ}$.

The embedding of  $\zz^{\circ}$ on $\bdy\HD$  can be extended to an identification of $\zz$ with $\bdy\HD$, by reinserting the basepoints, one in each component of $\bdy\HD\setminus \zz^{\circ}$. The result is a \emph{multipointed bordered Heegaard diagram} for $(M,\zz)$. Since there is no loss of information when moving from one perspective to the other, we denote  $\zz^{\circ}$ simply by $\zz$ in this paper.  We will denote the structures $\bsahat(M, \bsGamma,\zz^{\circ})$ and $\bsdhat(M, \bsGamma,\zz^{\circ})$ by $\cfat(M, \zz)$ and $\cfdt(M, \zz)$, respectively. Explicitly, given a multipointed bordered Heegaard diagram, these structures are defined by counting the ``usual" holomorphic curves; the condition of ``avoiding the basepoints" is equivalent to ``avoiding $\bdy\HD\setminus \zz^{\circ}$".  The gluing formula for bordered sutured Floer homology implies that if the closed three-manifold $M$ with multiple basepoints is obtained by gluing  multipointed bordered three-manifolds $M_1\cup_{\bdy} M_2$, with $M_1$ parametrized by $\zz$ and $M_2$ by $-\zz$, then $\cfat(M_1)\boxtimes_{\cala(\zz)} \cfdt(M_2)$ is homotopy equivalent to $\cftilde(M,n)$.

% !TEX root = ../bordered-contact-birs.tex

\section{The bordered contact invariant}
\label{sec:hd}

Let $(M,\xi, \fol)$ be a foliated contact three-manifold. In \cite{afhlpv}, a  sorted foliated open book for $(M,\xi, \fol)$ was used to construct a Heegaard diagram for an associated bordered sutured manifold $(M,  \bsGamma,\mathcal{Z})$, along with a preferred generator of the diagram. The homotopy equivalence class of this generator in the resulting  bordered sutured Floer homology is an invariant of the foliated contact three-manifold \cite[Theorem 1]{afhlpv}.  In particular, the class is independent of the choice of open book. We recall the construction next, slightly rephrasing to use multipointed bordered Floer homology, and we work out a small example.

As explained in Section~\ref{sec:mpbordered}, we can convert the data of a bordered sutured manifold  $(M,  \bsGamma,\mathcal{Z})$ to multipointed bordered data for a simpler perspective.  We describe the parametrization on the boundary of the resulting bordered manifold $(M, \zz)$ directly below.

We use the foliation to define a natural parametrization of $\bdy M$ via a pointed matched multicircle $\zz=(Z,a,m)$. 
Recall that the data of the foliation remembers the page index associated to each leaf, and in particular, that there is a distinguished union of regular leaves denoted by $A_0$. 
Let $D\subset \bdy M$ be a closed neighborhood of $A_0$, and let $Z = \bdy D$. Note that $D$ is a union of $n$ disks, where $2n$ is the number of elliptic points in the foliation. Let $\delta_i$ be subarc of the positive (resp. negative) separatrix for $h_i^{+}$ (resp. $h_i^-$) that lies in $\bdy M\setminus (\mathrm{int} D)$.  Define  $a\subset Z$ to be the set of points that are the boundaries of $\delta_i$ and let $m$ be the matching induced on the points in $a$ by $\delta_i$. For each component of $Z$, mark a basepoint with a smallest possible  $(0,2\pi)$-coordinate.
See Figure~\ref{fig:bord-fol-mfd} for an example. It is easy to check that $\zz=(Z,a,m)$ together with the embedding of the arcs $\delta_i$ parametrizes $\bdy M$.

\begin{figure}[h]
	\begin{center}
		\labellist
		\pinlabel $\textcolor{Turquoise}{\delta_1}$ at 100 150
		\pinlabel $\textcolor{blue}{\delta_2}$ at 90 56
	\endlabellist
		\includegraphics[scale=0.69]{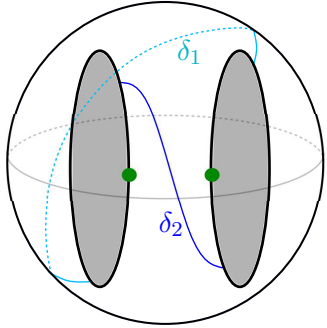}
		\caption{The bordered three-manifold associated to the foliated ball that is the neighborhood of the orange arc from Example~\ref{ex:s3-hopf}. The two grey disks make up $D$, their boundary is $Z$, the two acs $\delta_i$ are drawn in blue and labelled on the figure, and the basepoints are drawn in green.}\label{fig:bord-fol-mfd}
	\end{center}
\end{figure}

Now, fix an abstract sorted foliated open book $(\{S_i\}_{i=0}^{2k},h, \{\gamma_i^\pm\})$ for  the foliated contact three-manifold $(M,\xi,\mathcal{F})$. The sortedness condition ensures that the first page of the open book, together with its (indexed) $\gamma_i^+$ arcs; the last  page together with its (indexed) $\gamma_j^-$ arcs; and the monodromy $h$  fully describe the manifold. In fact, the union of the first and last page naturally describes a (cornered) handlebody decomposition for $M$. Using the data of $(\{S_i\},h, \{\gamma_i^\pm\})$, we describe a multipointed bordered Heegaard diagram $\HH = (\Sigma, \alphas,\betas, \zz)$ for this handlebody decomposition, along with a preferred generator. We outline the construction below; cf.~\cite[Section 3]{afhlpv}.

 Let $g_i$ be the genus of $S_i$ and let $n_i$ be the number of boundary components of $S_i$. Recall that  the boundary of the cornered surface $S_i$ is $B\cup A_i$, where $B$ is a union of circles and arcs, and $A_i$ is a union of intervals only.

We let $\Sigma=S_{0}\cup_{B}-S_{0}$.  In order to distinguish the two copies, we will write 
\[\Sigma=S_{\epsilon}\cup_{B}-S_{0},\] 
but we emphasize that  $S_\epsilon$ can be identified with $S_0$. The surface $\Sigma$ has genus $2g_0+n_0-1$ and $|A_0|$ boundary components. 

For $i\in H_-$, consider the $S_{2k}$ copies of the sorting arcs $\gamma_i^-$, and let $\beta^-_i = -h(\gamma_i^-)$ on $-S_0$.
 For $i\in H_+$, consider  the $S_{\epsilon}$ copies of the sorting arcs $\gamma_i^+$.  The endpoints of $\gamma_i^+$ lie  near the $E_+$ end of intervals of $A_\epsilon$. Isotope the arcs $\{\gamma_i^+\}$ (simultaneously, to preserve disjointness) near the endpoints along $-\bdy\Sigma$ until they all lie in $I_+\subset A_0$;  the isotopy stops after crossing $E_+$ and before encountering $\cup_{j\in H_-}-h(\gamma_j^-)\subset -S_0$. Call the resulting arcs $\beta_i^+$. 
Define a set of arcs $\betas^a = \{\beta_1^a, \ldots, \beta_{2k}^a\}$ by
\[\beta_i^a = \begin{cases}
\beta_i^+  & \textrm{if $i\in H_+$,} \\
\beta_i^- & \textrm{if $i\in H_-$.}
\end{cases}\]
As in \cite{afhlpv}, we use the notation $\beta_i^a$  or $\beta_i^{\pm}$ if $i\in H_{\pm}$ interchangeably.

 Let ${\boldsymbol b} = \{b_1, \ldots, b_{2g_0+n_0+|A_0|-k-2}\}$ be a set of cutting arcs for $P_{\epsilon}\subset S_{\epsilon}$ disjoint from $\betas^a$ and with endpoints on $B$, so that each connected component of $S_{\epsilon}\setminus ({\boldsymbol b}\cup \betas^a)$ is a disk with exactly one interval of $A_{\epsilon}$ on its boundary. (In \cite{afhlpv}, we show this can always be achieved.)  In other words, ${\boldsymbol b}$ is a basis for $H_1(P_\varepsilon, B)$. Recalling the identification  $S_\epsilon=S_0$, we may push $b_i\subset S_0$ through $M$ to lie on $S_0$ again and define
   \[\beta_i=b_i\cup -h\circ \iota(b_i)\subset S_{\epsilon}\cup_{B}-S_{0},\]
   where $\iota$ is the identification of $P_0$ with $P_{2k}$ from  Section~\ref{ssec:sortfob}. 
Write $\betas^c=\{\beta_1, \dots, \beta_{2g_0+n_0+|A_0|-k-2}\}$.

 For each cutting arc $b_i\in \boldsymbol b$ on $S_\epsilon$, let $a_i$ be an isotopic curve formed by pushing the endpoints negatively along the boundary so that $a_i$ and $b_i$ intersect once transversely. Similarly, for each arc $b_i^+ \coloneqq S_{\epsilon}\cap \beta_j^+$, let $\tilde a_j$ be an isotopic curve formed by pushing the endpoints negatively along the boundary so that $\tilde a_j$ and $b_i^+$ (and equivalently $\tilde a_j$ and $\beta_j^+$) intersect once transversely.  We ``double" each of these arcs to form the $\alpha$-circles which define the handlebody $S_0\times [0,\epsilon]$.  Namely, define  
  \begin{align*}
 \alpha_i &= a_i\cup -a_i \subset S_{\epsilon}\cup_{B}-S_{0}\\
 \widetilde{\alpha}_j &= \tilde a_j\cup -\tilde a_j \subset S_{\epsilon}\cup_{B}-S_{0},
 \end{align*}
   and write $\alphas^c = \{\widetilde\alpha_i\}_{i\in H_+}\cup \{\alpha_1, \dots, \alpha_{2g_0+n_0+|A_0|-k-2}\}$.
Place a basepoint on each interval of $A_{\epsilon}\subset S_{\epsilon}\subset \Sigma$. Write
\[{\bf z} = \{z_1, \ldots, z_{|A_{\epsilon}|}\}\]
for the set of basepoints. 

We say that a multipointed bordered Heegaard diagram $\HH = (\Sigma, {\alphas}, {\betas},\zz)$ constructed as above is \emph{adapted} to the sorted abstract foliated open book $(\{S_i\},h, \{\gamma_i^\pm\})$ and to the corresponding foliated contact three-manifold $(M,\xi,\mathcal{F})$.

Let $\HH$ be a  multipointed bordered  Heegaard diagram adapted to $(\{S_i\},h, \{\gamma_i^\pm\})$. In \cite{afhlpv}, we show that any such diagram is admissible. (In fact, in \cite{afhlpv} neighborhoods of basepoints are drilled out to obtain a bordered sutured diagram for a certain bordered sutured manifold naturally associated to $(M,\xi,\mathcal{F})$, but we suppress this discussion here.) 
 Using the  notation introduced above, 
define 
\[\xxx = \{x_1, \ldots, x_{2g_0+n_0+|A_0|-k-2}\} \cup \{x_i^+ \mid i\in H_+\}\]
to be the set of unique intersection points 
\begin{align*}
x_i &=  a_i\cap b_i \in S_{\epsilon}\subset \Sigma\\
x_i^+ &= \tilde a_i\cap b_i^+\in S_{\epsilon} \quad \textrm{ if } i\in H_+.
\end{align*}

We will use $\xxx$ to define two contact invariants in multipointed bordered Floer homology.

By \cite[Proposition 3.4]{afhlpv}  and \cite[Section 3.4]{bs-JG}, the diagram ${\overline{\HD}=(\Sigma, \betas,\alphas,\overline{\zz})}$ obtained by exchanging the roles of the two sets of curves and formally replacing the arc diagram $\zz$ of $\beta$-type (which is to say, parametrized by arcs which are part of the second set of curves)  with the identical arc diagram $\overline{\zz}$ of $\alpha$-type (parametrized by arcs which are part of the first set of curves) is a  multipointed bordered diagram for $(-M, \overline{\zz})$.  Write $\overline{\zz}=(Z, a, m)$. We have the following proposition.

\begin{proposition}[cf.~{\cite[Proposition 3.5]{afhlpv}}]\label{prop:xd-def}
The above $\xxx$ gives a well defined generator
\[
\xxx_D := \xxx\in \cfdt(\overline{\HH})
\]
with  $I_D(\xxx) = I(H_-)$ and $\delta^1(\xxx_D) = 0$, 
and a well defined generator
\[
\xxx_A  := \xxx\in \cfat(\overline{\HH})
\]
with $I_A(\xxx_A) = I(H_+)$ and $m_{i+1}(\xxx_A, a(\brho_1), \ldots, a(\brho_i))=0$ for all $i\geq 0$ and all sets of Reeb chords $\brho_j$ in $(Z, a)$. 
\end{proposition}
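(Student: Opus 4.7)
The argument naturally splits into two parts: a combinatorial verification that $\xxx$ defines a generator in the prescribed idempotent, and a geometric argument showing that no holomorphic curve operations emanate from it. The first part is essentially bookkeeping; the second is where the sorted condition from Definition~\ref{def:sort} does its work.

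For the idempotent claim, I will inspect which $\alpha$-arcs of $\overline{\HH}$ are occupied by $\xxx$. The points $x_i = a_i \cap b_i$ lie on the closed $\alpha$-curves $\alpha_i$ and thus contribute nothing to the $\alpha$-arc occupation. Each point $x_i^+$ with $i \in H_+$ lies on the $\alpha$-arc $\widetilde\alpha_i$, and by construction there are no other $\alpha$-arc intersections. Under the idempotent convention for multipointed bordered Floer homology (occupied arcs for type $A$, complementary arcs for type $D$), this immediately gives $I_A(\xxx_A) = I(H_+)$ and $I_D(\xxx_D) = I(H_-)$. Combined with the admissibility established in \cite{afhlpv}, $\xxx$ then yields a well-defined generator in each structure.

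The vanishing statements will follow from the position of $\xxx$ relative to the basepoints $\zz$. All points of $\xxx$ lie in $S_\epsilon$, and because the arcs $a_i$ and $\tilde a_i$ are obtained from $b_i$ and $b_i^+$ by pushing endpoints negatively along $\bdy S_\epsilon$, each intersection point sits in a thin region immediately adjacent to $\bdy\Sigma$. I will argue that at each $x \in \xxx$, two of the four local quadrants lie in a region containing a basepoint $z_k \in \zz$; any positive domain using one of these quadrants would have positive multiplicity at $z_k$, which is forbidden. This restricts any domain contributing to $\delta^1(\xxx_D)$ or $m_1(\xxx_A)$ to the ``inner'' quadrants at each corner of $\xxx$. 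A region-by-region analysis on $\Sigma = S_\epsilon \cup_B -S_0$, using that the $\beta$-circles $\beta_i = b_i \cup -h\circ\iota(b_i)$ are doubled across the monodromy and that the sorting arcs are disjoint, then forces any such domain to have zero multiplicity everywhere, yielding $\delta^1(\xxx_D) = 0$ and $m_1(\xxx_A) = 0$. For the higher $\ainf$ operations, any curve counted by $m_{i+1}(\xxx_A, a(\brho_1), \dots, a(\brho_i))$ must have nonempty east boundary on $\bdy\Sigma$; but every region of $\Sigma$ abutting $\bdy\Sigma$ contains a basepoint, so positivity of multiplicities again forces the domain to be trivial. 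Since the trivial domain has no Reeb chord asymptotics, all higher operations with $i \geq 1$ vanish as well.

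The main technical obstacle is the detailed regional analysis confirming that (a) the two ``outer'' quadrants at each point of $\xxx$ touch a basepoint, and (b) no positive domain supported in the ``inner'' quadrants and extending into $-S_0$ via the $\beta$-curves can avoid $\zz$. Sortedness is essential here: if $\gamma_i^+ \cap \gamma_j^- \neq \emptyset$ on some page, extra intersections among the $\alpha$- and $\beta$-curves would appear, and these could conceivably support positive domains disjoint from $\zz$. The sorted hypothesis ensures the curves in $\alphas \cup \betas$ interact cleanly enough near $\bdy\Sigma$ that the basepoint obstruction is sharp, and $\xxx$ plays the role of the ``top-most'' generator analogous to the EH class in sutured constructions of the contact invariant.
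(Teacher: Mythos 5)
Your overall strategy---idempotent bookkeeping plus a basepoint obstruction localized at the points of $\xxx$---is the right one, and it matches the model computation the paper carries out in Section~\ref{sec:local}. But two steps as written do not hold up. First, your treatment of the higher operations rests on the claim that ``every region of $\Sigma$ abutting $\bdy\Sigma$ contains a basepoint.'' This is false: the basepoints sit only on the $A_\epsilon$ intervals of $\bdy\Sigma$, and the regions meeting the boundary along the $-A_0$ side, between endpoints of the arcs $\beta_i^\pm$, need not contain any basepoint. Indeed, if the claim were true, the entire algebra action on $\cfat(\overline{\HH})$ and every Reeb-chord coefficient in $\delta^1$ would vanish for \emph{every} generator, contradicting Example~\ref{ex:ball1}, where $\delta^1(x_2y_2)=(\rho_1+\rho_2)\otimes x_1y_1$ is carried by a domain abutting $\bdy\Sigma$ away from the basepoints. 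The higher operations on $\xxx_A$ do vanish, but for the same local reason as $\delta^1$: no positive domain can have $\xxx$ as its initial generator at all, so there is nothing to split into cases.

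Second, that local reason is exactly the step you leave as ``a region-by-region analysis.'' Each point of $\xxx$ is the unique transverse intersection of a small pushoff ($a_i$ or $\tilde a_i$) with the arc it was pushed off from, so of its four quadrants, two lie in the thin strip between the pushoff and the original arc, and the other two lie in regions of $S_\epsilon\setminus({\boldsymbol b}\cup\betas^a)$, each of which contains a basepoint by construction. The point you must make explicit is that the thin strips are oriented \emph{into} the points of $\xxx$ (this is precisely what the paper invokes in Section~\ref{sec:local} for $x_1$ and $y_1$): they can serve only as terminal corners, never as initial ones, so no nonnegative domain emanates from $\xxx$, killing $\delta^1(\xxx_D)$ and all $m_{i+1}(\xxx_A,a(\brho_1),\ldots,a(\brho_i))$ simultaneously. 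Without this orientation statement the thin strip itself is a candidate positive domain and your argument does not close. Two smaller points: in $\overline{\HH}$ the parametrizing $\alpha$-arcs are the relabeled $\beta_i^\pm$, not the circles $\widetilde\alpha_i$---the point $x_i^+$ occupies the arc indexed by $i\in H_+$ because it lies on $\beta_i^+$; and sortedness is not what makes ``the basepoint obstruction sharp''---it is what allows the ghost page $P_\epsilon$, the cutting arcs ${\boldsymbol b}$, and hence the diagram $\HH$ to be constructed at all.
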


\begin{example}\label{ex:ball1}

We illustrate the construction outlined above using the (sorted) foliated open book in Figure~\ref{fig:3ball-h}. Recall that the three pages depicted in Figure~\ref{fig:3ball-h} in fact can be used to construct different foliated open books, depending on the choice of monodromy $\tau^{n}$, for $n\in \Z$, where $\tau$ is a positive Dehn twist along the core of the annular page $S_2$. 

First, consider the foliated open book with pages depicted in Figure~\ref{fig:3ball-h} and monodromy $\tau$ (this was denoted by $h^+$ in Figure ~\ref{fig:3ball-h}). Figure~\ref{fig:3ball-h-plus} shows  the associated  Heegaard diagram $\HD^+$.
\begin{figure}[h]
	\begin{center}
	\labellist
		\pinlabel $S_{\epsilon}$ at 60 -10
		\pinlabel $-S_{0}$ at 450 -10
		\pinlabel $\cup_B$ at 260 50
		\pinlabel $x_1$ at 97 40
		\pinlabel $y_1$ at 54 142
		\pinlabel $x_2$ at 379 43
		\pinlabel $y_2$ at 464 118
	\endlabellist
		\includegraphics[scale=0.7]{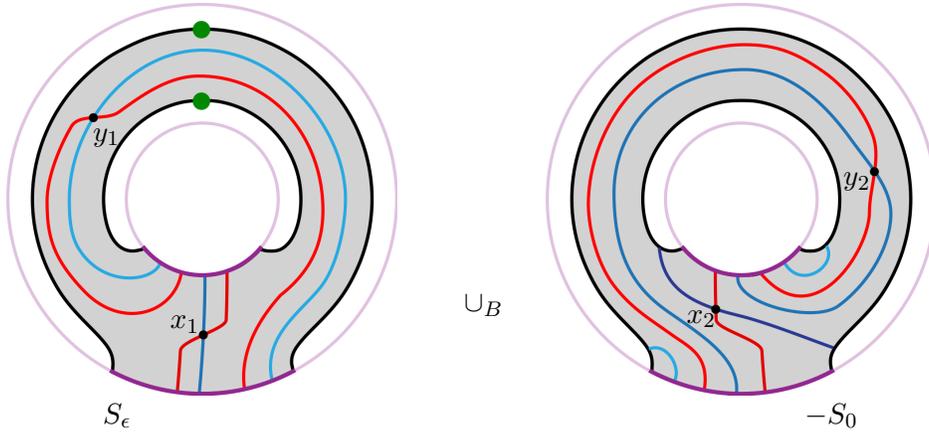}
		\vspace{.2cm}
		\caption{The Heegaard diagram for the sorted foliated open book $(\{S_0, S_1, S_2\}, h^+)$ from Figure \ref{fig:3ball-h}.  The monodromy $h^+$ is a positive Dehn twist, so the images $\beta_2^- = -h^+(\gamma_{2}^-)$ and $ -h^+\circ \iota(b_1)$ are  the dark and medium-dark blue curves  on $-S_0$, respectively. Intersection points are labelled differently from the above definition, for convenience. The contact generator $\xxx$ is the pair $\{x_1, y_1\}$, or $x_1y_1$ for short.}\label{fig:3ball-h-plus}
	\end{center}
\end{figure}
We label the intersection points in the Heegaard diagram $\HD^+$ by $x_1$, $x_2$, $y_1$, and $y_2$ as in Figure~\ref{fig:3ball-h-plus}. The diagram has two generators -- $x_1y_1$ and $x_2y_2$, where $x_1y_1$ is the special generator $\xxx$ defined above. Let $\rho_1$ and $\rho_2$  be the algebra elements in $\mathcal{A}(\bdy\overline{\HD^+})$ corresponding to the Reeb chords on the inside and outside boundary components of the Heegaard diagram, respectively, as seen on Figure~\ref{fig:3ball-h}. 
 The type $D$ structure $\cfdt(\overline{\HD^+})$ is generated by $x_1y_1$ and $x_2y_2$, and has structure maps
\begin{align*}
\delta^1(x_1y_1)  &= 0\\
\delta^1(x_2y_2)  &= (\rho_1+\rho_2)\otimes x_1y_1.
\end{align*}
The contact class $c_D(B^3,\xi, \fol)$ is the homotopy equivalence class of $x_1y_1$.

Next, consider the foliated open book with pages depicted in Figure~\ref{fig:3ball-h} and monodromy $\tau^{-1}$ (which was denoted $h^-$ in Figure ~\ref{fig:3ball-h}). Figure~\ref{fig:3ball-h-minus} shows  the associated  Heegaard diagram $\HD^-$.
\begin{figure}[h]
	\begin{center}
	\labellist
		\pinlabel $S_{\epsilon}$ at 60 -10
		\pinlabel $\cup_B$ at 260 50
		\pinlabel $-S_{0}$ at 450 -10
		\pinlabel $x_1'$ at 96 42
		\pinlabel $y_1'$ at 52 140
		\pinlabel $x_2'$ at 378 59
		\pinlabel $x_3'$ at 397 25
		\pinlabel $x_4'$ at 424 20
		\pinlabel $y_2'$ at 425 60
		\pinlabel $y_3'$ at 447 36
		\pinlabel $y_4'$ at 476 44
	\endlabellist
		\includegraphics[scale=0.7]{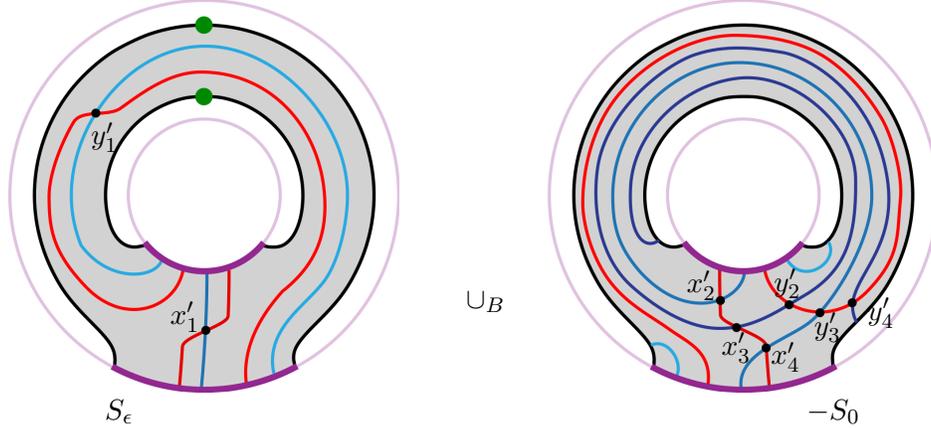}
		\vspace{.2cm}
		\caption{The Heegaard diagram for the sorted foliated open book $(\{S_0, S_1, S_2\}, h^-)$ from Figure \ref{fig:3ball-h}.  The monodromy $h^-$ is a negative Dehn twist, so the images $\beta_2^- = -h^-(\gamma_{2}^-)$ and $ -h^-\circ \iota(b_1)$ are  the dark and medium-dark blue curves  on $-S_0$, respectively. The contact generator $\xxx$ is $x_1'y_1'$.}\label{fig:3ball-h-minus}
	\end{center}
\end{figure}
We label the intersection points in $\HD^-$ by $x_1'$, $x_2'$, $x_3'$, $x_4'$, $y_1'$, $y_2'$, $y_3'$, and $y_4'$ as in Figure~\ref{fig:3ball-h-minus}. Let $\rho_1, \rho_2\in \mathcal{A}(\bdy\overline{\HD^-})$ be as in the previous example.
The type $D$ structure $\cfdt(\overline{\HD^-})$ is generated by $x_1'y_1'$, $x_1'y_2'$, $x_1'y_4'$, $x_2'y_1'$, $x_2'y_2'$, $x_2'y_4'$, $x_3'y_3'$, $x_4'y_1'$, $x_4'y_2'$, and $x_4'y_4'$, and has structure maps

\begin{align*}
\delta^1(x_1'y_1') &= 0\\
\delta^1(x_1'y_2') &= \rho_1\otimes x_1'y_1' \\
\delta^1(x_1'y_4') &= \rho_2\otimes x_1'y_1'\\
\delta^1(x_2'y_1') &= I\otimes x_1'y_1' \\
\delta^1(x_2'y_2') &= \rho_1\otimes x_2'y_1' + I\otimes x_1'y_2'\\
\delta^1(x_2'y_4') &= I\otimes x_1'y_4' + \rho_2\otimes x_2'y_1'\\
\delta^1(x_3'y_3') &= 0 \\
\delta^1(x_4'y_1') &= I\otimes x_1'y_1' \\
\delta^1(x_4'y_2') &= I\otimes x_1'y_2' +   I\otimes x_3'y_3' \\
\delta^1(x_4'y_4') &= I\otimes x_1'y_4' +   \rho_2\otimes x_4'y_1' 
\end{align*}
In particular, $\delta^1(x_2'y_1') = I\otimes x_1'y_1'$  implies that there is a type $D$ homotopy equivalence from $\cfdt(\overline{\HH^-})$  to an equivalent structure, carrying $x_1'y_1'$ to zero.

\end{example}

% !TEX root = ../bordered-contact-birs.tex

\section{Vanishing of the contact class for overtwisted structures -- a local argument}
\label{sec:local}

In this section, we illustrate the power of invariants compatible with cut-and-paste constructions by providing a local argument that the contact class $c(\xi)$ for closed contact manifolds vanishes  if the contact structure is overtwisted. 

We begin by showing that the bordered contact invariant vanishes for a neighborhood of an overtwisted disk. Specifically, we consider the foliated open book constructed in \cite{LV2} for a three-ball  neighborhood  $(B^3, \xi_{\mathrm{OT}}, \fol_{\mathrm{OT}})$.  In Example~\ref{ex:ot-sfob}, we stabilized the foliated open book from \cite{LV2} to a sorted one. 
We now construct the Heegaard diagram $\HH$ associated to the resulting sorted foliated open book from Figure \ref{fig:stab2}. 
For convenience, in Figure~\ref{fig:s0s4-OTdisk} we display again the pages $S_0$ and $-S_4$, along with the sorting arcs decorations. 

\begin{figure}[h]
	\begin{center}
		\includegraphics[scale=0.7]{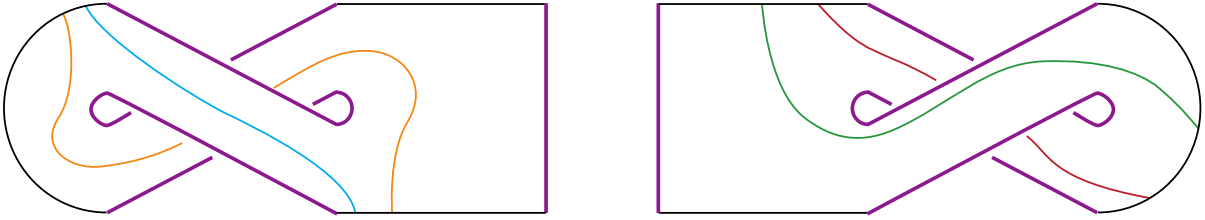}
		\caption{The first page (to the left) and the mirror of the last page (to the right) of the sorted foliated open book in Figure \ref{fig:stab2}.}\label{fig:s0s4-OTdisk}
	\end{center}
\end{figure}

Figure~\ref{fig:HD-OTdisk} shows  the associated  Heegaard diagram $\HH$.

\begin{figure}[h]
	\begin{center}
	\labellist
		\pinlabel $x_1$ at 38 80
		\pinlabel $y_1$ at 132 67
		\pinlabel $w_1$ at 232 107
		\pinlabel $w_4$ at 367 51
		\pinlabel $w_3$ at 402 92
		\pinlabel $w_2$ at 450 95
		\pinlabel $x_2$ at 480 31
		\pinlabel $x_3$ at 450 28
		\pinlabel $x_4$ at 418 65
		\pinlabel $y_2$ at 535 50
		\pinlabel $y_3$ at 555 75
		\pinlabel $x_5$ at 582 35
		\pinlabel $x_6$ at 608 89
		\pinlabel $x_7$ at 580 92
	\endlabellist
		\includegraphics[scale=0.7]{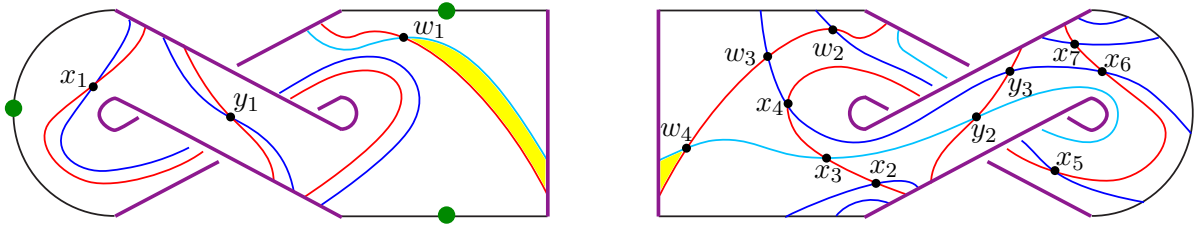}
		\caption{The Heegaard diagram for the sorted foliated open book in Figure \ref{fig:stab2}.  The monodromy $h$ is the identity, so the images $\beta_i^- = -h(\gamma_{i}^-)$ are simply the sorting arcs $\gamma_i^-$ on $-S_0$. Intersection points are labelled differently from the above definition, for convenience. The contact generator $\xxx$ is the triple $\{x_1, y_1, w_1\}$, or $x_1y_1w_1$ for short.}\label{fig:HD-OTdisk}
	\end{center}
\end{figure}

The generator $x_1y_1w_1\in\cfdt(\overline{\HH})$ represents the contact class. 
We claim that there is a unique holomorphic curve that avoids the basepoints and is asymptotic to  $x_1y_1w_4$ at $-\infty$, and this curve ends at $x_1y_1w_1$.  

Indeed, $x_1$ and $y_1$ cannot be starting moving coordinates for a holomorphic curve; the only non-basepointed regions at these intersection points are the thin strips supported on the $S_{\epsilon}$ part of the diagram, but the orientation on these strips is \emph{into} $x_1$ and $y_1$. So any holomorphic curve starting from $x_1y_1w_4$ must only have $w_4$ as a moving coordinate. A curve that hits the boundary of the Heegaard diagram would need to have a moving coordinate on a $\beta$-arc. Since $w_4$ is on a $\beta$-circle, all holomorphic curves starting from $x_1y_1w_4$ project to the interior of the diagram. Thus, any such curve with a single moving coordinate projects to an immersed bigon. By counting local coefficients, the yellow bigon from $x_1y_1w_4$ to $x_1y_1w_1$ in  Figure~\ref{fig:HD-OTdisk} represents the unique such curve.  

Thus, considering $\cfdt(\overline{\HH})$, we have $\delta^1(x_1y_1w_4) = I\otimes x_1y_1w_1$. Or, if one prefers to consider $\cfat(\overline{\HH})$, we have $m_1(x_1y_1w_4) = x_1y_1w_1$, whereas higher products $m_i$ vanish on $x_1y_1w_4$.  It follows that there is a type $D$ (resp.\ type $A$) homotopy equivalence from $\cfdt(\overline{\HH})$ (resp.\ $\cfat(\overline{\HH})$) to an equivalent structure, carrying $x_1y_1w_1$ to zero.

Recall from the introduction that we claimed the Ozsv\'ath--Szab\'o vanishing result for overtwisted contact manifolds can be recovered from gluing properties of the bordered contact invariant.  In fact, the necessary technical results have already been established, and we conclude by assembling them into the promised proof.

\begin{proof}[Proof of Corollary~\ref{cor:ot}]
Suppose $(M, \xi)$ is a closed overtwisted three-manifold. As discussed in Section~\ref{sec:fobcontact}, $(M, \xi)$  contains an overtwisted disk whose neighborhood is contactomorphic to the contact three-ball $(B^3, \xi_{\mathrm{OT}}, \fol_{\mathrm{OT}})$ studied in Example~\ref{ex:ot-sfob}. Thus, $(M, \xi)$ decomposes as the union of two foliated contact three-manifolds, one of which is $(B^3, \xi_{\mathrm{OT}}, \fol_{\mathrm{OT}})$.The computation, above, together with Theorem~\ref{thm:glue} and functoriality for $\boxtimes$, implies that $c(\xi)=0$. 
\end{proof}

\bibliographystyle{alpha}

\bibliography{master}

\end{document}